\documentclass{amsart}
\usepackage[fontsize=12pt]{fontsize}
\usepackage{amssymb,fix-cm}
\usepackage{enumitem}
\usepackage{tikz-cd}
\usepackage[a4paper,left=2.9cm,right=2.9cm,top=2.6cm,bottom=2.6cm]{geometry}
\usepackage[
  colorlinks=true,
  pdfstartview=FitH
]{hyperref}

\numberwithin{equation}{section}

\theoremstyle{plain}
\newtheorem{theorem}{Theorem}[section]
\newtheorem{lemma}[theorem]{Lemma}
\newtheorem{proposition}[theorem]{Proposition}
\newtheorem{corollary}[theorem]{Corollary}
\newtheorem{conjecture}[theorem]{Conjecture}

\theoremstyle{definition}
\newtheorem{definition}[theorem]{Definition}

\theoremstyle{definition}
\newtheorem{remark}[theorem]{Remark}
\newtheorem{example}[theorem]{Example}

\title[Sharp effective Nullstellensatz]{Sharp effective Nullstellensatz for projective Cohen-Macaulay schemes}
\author[Zhi Li]{Zhi Li}
\address{Zhi Li: School of Mathematical Sciences and Key Laboratory of Mathematics and Information Networks (Ministry of Education), Beijing University of Posts and Telecommunications, Beijing 100876, China.}
\email{lizhi@amss.ac.cn}

\author[Xiankui Meng]{Xiankui Meng}
\address{Xiankui Meng: School of Mathematical Sciences and Key Laboratory of Mathematics and Information Networks (Ministry of Education), Beijing University of Posts and Telecommunications, Beijing 100876, China.}
\email{mengxiankui@amss.ac.cn}

\author[Xiaoyu Su]{Xiaoyu Su}
\address{Xiaoyu Su: School of Mathematical Sciences and Key Laboratory of Mathematics and Information Networks (Ministry of Education), Beijing University of Posts and Telecommunications, Beijing 100876, China.}
\email{xiaoyusu@bupt.edu.cn}

\author[Langfeng Zhu]{Langfeng Zhu}
\address{Langfeng Zhu: School of Mathematics and Statistics, Wuhan University, Wuhan 430072, China}
\email{zhulangfeng@amss.ac.cn}

\subjclass[2020]{Primary 14Q20; Secondary 13H10, 13D07, 14C17}
\keywords{effective Nullstellensatz, divisor reduction, Cohen-Macaulay scheme, Noether exponent, Koszul complex, relative duality}

\begin{document}
\begin{abstract}
In this paper, we prove a sharp effective Nullstellensatz for projective Cohen-Macaulay schemes of pure dimension over an arbitrary field. In particular, if $L$ is an ample line bundle on a smooth complex projective variety $X$ and $\mathcal J\subseteq\mathcal O_X$ is a coherent ideal sheaf with $\mathcal J\otimes L$ globally generated, then $(\sqrt{\mathcal J})^{\deg_L X}\subseteq\mathcal J$. This establishes the sharp bound conjectured by Ein and Lazarsfeld.
More generally, we obtain sharp bounds in terms of mixed intersection numbers and a refinement involving products of powers of prime ideal sheaves associated to the irreducible components of the base locus. We also establish sharp radical power bounds for homogeneous ideals in finitely generated standard graded Cohen-Macaulay domains, extending Koll\'ar's sharp effective Nullstellensatz for polynomial rings. The proofs combine purity and cycle preservation for leading Ext sheaves with Koszul evaluation maps compatible with successive divisor cuts.
\end{abstract}
\maketitle

\section{Introduction}
For an ideal $I\subseteq \mathbb{C}[x_1,\dots,x_n]$, Hilbert's Nullstellensatz states that the vanishing ideal of the zero locus defined by $I$ equals the radical of $I$. In other words, if a polynomial $f$ vanishes on the zero locus of $I$, then $f\in\sqrt I$, which means $f^m\in I$ for some sufficiently large $m$. The effective version of the Nullstellensatz makes Hilbert's Nullstellensatz quantitative by giving explicit upper bounds for the exponent $m$. This question was first considered by Hermann \cite{Hermann26}, and a major breakthrough was achieved by Brownawell \cite{Brownawell87}. 
For polynomial rings, sharp bounds were obtained by Koll\'ar \cite{Kollar88} and Jelonek \cite{Jelonek05} over an arbitrary field, including bounds for powers of homogeneous radicals as well. In \cite{EinLazarsfeld}, Ein and Lazarsfeld formulated an effective Nullstellensatz in the setting of smooth complex projective varieties. 
For further developments and variants of the effective Nullstellensatz, we refer the reader to \cite{Skoda72, BrianconSkoda74, Sombra97, Brownawell98, Sombra99, KollarArbitrary, AW2015}.

This work is motivated by the results of Koll\'ar \cite{Kollar88} and  Ein-Lazarsfeld \cite{EinLazarsfeld}. Koll\'ar proved sharp effective Nullstellensatz bounds for polynomial rings over arbitrary fields, including estimates for ordinary powers of homogeneous radicals. In particular, on $\mathbb{P}^n$ with $n\geq 2$, for ideals generated by homogeneous polynomials of common degree $a\geq 3$, his result yields the exponent $a^n$. Subsequently, Ein and Lazarsfeld considered the effective Nullstellensatz bounds in the setting of smooth complex projective varieties: for an ample line bundle $L$ over an $n$-dimensional smooth projective variety $X$, and for an ideal sheaf $\mathcal{J}$ with $\mathcal{J} \otimes L$ globally generated, they established
\[
(\sqrt{\mathcal{J}})^{n\deg_L X}\subseteq\mathcal{J}.
\]
However, they suggested that the exponent $n\deg_L X$ may be sharpened to $\deg_L X$.

\begin{conjecture}[Ein-Lazarsfeld \cite{EinLazarsfeld}]\label{conj}
If $X$ is a smooth complex projective variety, $L$ is an ample line bundle, $\mathcal J$ is an ideal sheaf with $\mathcal{J}\otimes L$ globally generated, then
\[
(\sqrt{\mathcal{J}})^{\deg_L X}\subseteq\mathcal{J}.
\]
\end{conjecture}

The very ample case of this conjecture was proved by Jelonek \cite{Jelonek05}.

In this paper, we prove Conjecture \ref{conj} in full generality by establishing the following theorem in a general framework.

\begin{theorem}
\label{thm:unified-intersection}
Let $X$ be a projective Cohen-Macaulay scheme of pure dimension $n$ over a field $k$, and $\mathcal J\subseteq\mathcal O_X$ a coherent ideal sheaf.
Let \(B = V(\mathcal{J})\) denote the scheme defined by $\mathcal J$. For $0\leq r\leq n$, let $A, L_i$ be ample line bundles on $X$, and $s_i\in H^0(X,\mathcal J\otimes L_i)$ for $i=1,2,\ldots,r$.
Write $B_0=X$ and $B_j=V(s_1,\ldots,s_j)$ for $1\leq j\leq r$.
Assume that
\begin{enumerate}[label=\textup{(\arabic*)}]
\item $\dim(B_j\setminus|B|)\leq n-j$ for $1\leq j\leq r$;
\item $B_r\setminus|B|=\varnothing$ if $r<n$.
\end{enumerate}
Set $ d=\deg \left(c_1(L_1)\cdots c_1(L_r)c_1(A)^{n-r}\cap[X]\right)$.
Then $$(\sqrt{\mathcal J})^d\subseteq\mathcal J.$$

Moreover, there exist finitely many irreducible closed subsets
$W_\alpha\subseteq|B|$, endowed with their reduced subscheme structures,
and positive integers $m_\alpha$, such that $\bigcup_\alpha|W_\alpha|=|B|$,
\[
 \prod_\alpha\mathcal I_{W_\alpha}^{m_\alpha}\subseteq\mathcal J,
\]
and $\sum_\alpha m_\alpha  \deg \left( \left(\prod_{i=\mathrm {codim}_X W_\alpha+1}^{r}c_1(L_i)\right)    c_1(A)^{n-r}\cap[W_\alpha]  \right)\leq d$. 
Here $\mathrm {codim}_X W_\alpha\in\{0,\ldots,r\}$ and $\mathcal I_{W_\alpha}$ denotes the ideal sheaf of $W_\alpha$ in $X$.
\end{theorem}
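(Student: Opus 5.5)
We follow the divisor-reduction strategy of Kollár and Ein--Lazarsfeld, with the Cohen--Macaulay hypothesis taking the place of smoothness. The plan has three parts: a cycle construction, a local comparison of ideals near each distinguished component, and a degree count.

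\emph{Cycle construction.} We build cycles $Z_0=[X]$, $Z_1,\ldots,Z_r$ by successive cuts. Write $\mathrm{div}(s_j)$ for the divisor of $s_j$, taken locally along the components of $Z_{j-1}$ that are not contained in $|B|$. Then $Z_j$ is the part of $\mathrm{div}(s_j)\cdot Z_{j-1}$ supported off $|B|$. The components of $\mathrm{div}(s_j)\cdot Z_{j-1}$ lying inside $|B|$ are recorded as distinguished varieties $W_\alpha$, with codimension $j$ and multiplicity $m_\alpha$. Components of $X$ inside $|B|$ are codimension-$0$ distinguished varieties. Hypothesis (1) makes each $s_j$ cut properly, so every cycle has the expected dimension $n-j$. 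Hypothesis (2), or the dimension count when $r=n$, forces the residual part after the last cut to be empty or $0$-dimensional off $|B|$.

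\emph{Degree count.} Intersect with the remaining classes. By the refined Bézout/positivity argument of Fulton's intersection theory, with ampleness of $L_i$ and $A$ making every contribution nonnegative, we get
\[
\sum_\alpha m_\alpha \deg\Bigl(\prod_{i>\mathrm{codim} W_\alpha}^{r} c_1(L_i)\, c_1(A)^{n-r}\cap[W_\alpha]\Bigr)\le d.
\]
The main containment $(\sqrt{\mathcal J})^d\subseteq\mathcal J$ then follows from the refined statement: since the $W_\alpha$ cover $|B|$, we have $\sqrt{\mathcal J}\subseteq\mathcal I_{W_\alpha}$ for every $\alpha$. Each degree in the sum is at least $1$, so $\sum m_\alpha\le d$, giving $(\sqrt{\mathcal J})^d\subseteq\prod_\alpha\mathcal I_{W_\alpha}^{m_\alpha}$.

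\emph{Local comparison.} The heart of the proof is $\prod_\alpha\mathcal I_{W_\alpha}^{m_\alpha}\subseteq\mathcal J$. We would prove it by induction on the cuts, using the tools the abstract names, leading Ext sheaves and Koszul evaluation maps.

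1. Let $Y_j$ be the scheme $V(s_1,\ldots,s_j)$. Off $|B|$ the sections form a regular sequence on the CM scheme $X$, so the Koszul complex on $s_1,\ldots,s_j$ is exact there.

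2. Globally, consider the top sheaf $\mathcal{E}xt^j(\mathcal O_{Y_j},\omega_X)$. Purity says this sheaf has no embedded components. Cycle preservation says its cycle equals $\mathrm{div}(s_j)\cdot Z_{j-1}$ in codimension $j$.

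3. Take the Koszul evaluation map $\mathcal O_X\to\mathcal{E}xt^j(\mathcal O_{Y_j},\omega_X)\otimes\omega_X^{-1}\otimes\det$, compatible with the successive cuts. Its kernel is locally the ideal generated by $s_1,\ldots,s_j$ together with an unmixedness correction.

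4. By purity, a local function $f$ lies in the kernel as soon as it lies there generically along each associated cycle component. Along a component $W$ of multiplicity $m$, the length of the Ext module at the generic point is $m$. Hence $\mathcal I_W^{m}$ kills the generic stalk, since a module of length $m$ over a local ring is annihilated by the $m$-th power of the maximal ideal.

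5. Peeling off the components inside $|B|$ at each stage yields a product of $\mathcal I_{W_\alpha}^{m_\alpha}$ that multiplies the image into a smaller ideal, continuing to the next cut. After the last cut, the residual cycle is empty off $|B|$, so the remaining unit ideal equals $\mathcal J$ up to the pieces already removed.

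Since $\mathcal J\otimes L_i$ is generated at the relevant points and $s_i\in\mathcal J$, the ideal $(s_1,\ldots,s_r)$ must be compared with $\mathcal J$. We would do this by a general-choice argument, or by adding further sections of $\mathcal J\otimes A$ so that $\mathcal J$ equals the ideal of sections locally.

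\textbf{Main obstacle.} The key difficulty is step 4. We must prove purity of the leading Ext sheaf and the equality of its cycle with the intersection cycle over an arbitrary field, for non-reduced CM $X$. We must also ensure the Koszul evaluation maps commute with successive cuts, so that the multiplicities multiply correctly. Our first attempt would be relative duality for the finite-on-fibres map given by the sections, reducing to the local statement that on a CM local ring $\mathrm{Ext}^j(R/(s),R)$ is $S_1$ of the right dimension. The final step from $(s_1,\ldots,s_r)$ to $\mathcal J$ also needs care when $r<n$; hypothesis (2) is designed exactly for this.
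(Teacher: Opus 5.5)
\textbf{Verdict: genuine gap in the local comparison step.}

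\textbf{What is correct.} Your cycle construction, the degree count, and the deduction of $(\sqrt{\mathcal J})^d\subseteq\mathcal J$ from the product statement are all sound. The deduction works because each degree in your sum is at least $1$ and each $W_\alpha\subseteq|B|$. The first two agree with the paper's telescoping degree count.

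\textbf{Where it fails.} The gap is the local comparison $\prod_\alpha\mathcal I_{W_\alpha}^{m_\alpha}\subseteq\mathcal J$, which carries the whole content of the theorem, and as sketched it would fail. You apply purity and cycle preservation to $\mathcal{E}\!xt^j_X(\mathcal O_{Y_j},\omega_X)$ with $Y_j=B_j$. But $B\subseteq B_j$, and hypothesis (1) bounds dimension only off $|B|$, so $B_j$ has components of dimension $>n-j$ whenever $\dim B>n-j$. Then this Ext is not the leading one, and neither purity nor a cycle identity holds for it. Moreover, $\mathcal O_{B_j}$ retains everything $s_1,\ldots,s_{j-1}$ cut out inside $|B|$, so its multiplicities are not those of $\mathrm{div}(s_j)\cdot Z_{j-1}$. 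By contrast, purity and cycle preservation of the leading Ext, which you flag as the main obstacle, are classical (Schenzel's canonical-module results and finite-length local duality).

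\textbf{First missing idea: peel on sheaves before each cut.} You need to realize your cycles $Z_j$ by sheaves. Set $\mathcal F_0=\omega_X$, $\mathcal F_j^{B\mathrm{tf}}=\mathcal F_j/\Gamma_{|B|}\mathcal F_j$, and $\mathcal F_{j+1}=\mathcal F_j^{B\mathrm{tf}}/s_{j+1}(\mathcal F_j^{B\mathrm{tf}}\otimes L_{j+1}^{-1})$.
\begin{itemize}
\item The associated points of $\mathcal F_j^{B\mathrm{tf}}$ lie in $U=X\setminus|B|$, where this sheaf is $\omega_X\otimes\mathcal O_{B_j}$. So (1) makes $s_{j+1}$ regular on it.
\item Consequently $\dim\operatorname{Supp}\mathcal F_j\le n-j$, the functor $\mathcal E:=\mathcal{E}\!xt^j_X(-,\omega_X\otimes\bigotimes_{i\le j}L_i^{-1})$ is the leading Ext, and $[\mathcal F_j]_{n-j}=\mathrm{div}(s_j)\cdot Z_{j-1}$.
\item Dualizing $0\to\Gamma_{|B|}\mathcal F_j\to\mathcal F_j\to\mathcal F_j^{B\mathrm{tf}}\to0$ and using vanishing of lower Ext gives $0\to\mathcal E\mathcal F_j^{B\mathrm{tf}}\to\mathcal E\mathcal F_j\to\mathcal E\Gamma_{|B|}\mathcal F_j$.
\item Hence the cokernel $\mathcal Q_j$ is a subsheaf of a pure sheaf on $|B|$ with cycle $\sum_{\mathrm{codim}\,W_\alpha=j}m_\alpha[W_\alpha]$. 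It is therefore killed by $\mathfrak a_j=\prod_{\mathrm{codim}\,W_\alpha=j}\mathcal I_{W_\alpha}^{m_\alpha}$.
\end{itemize}

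\textbf{Second missing idea: evaluation in the opposite direction.} The annihilation has to be transported to the next stage, and this needs an evaluation map going the other way from yours.
\begin{itemize}
\item Dualize the Koszul augmentation $\mathsf K(s_1,\ldots,s_j)\otimes\omega_X\to\mathcal F$ and use $R\mathcal{H}om_X(\omega_X,\omega_X)\cong\mathcal O_X$. This gives $\mathcal E\mathcal F\to\mathcal O_{B_j}\to\mathcal O_B$, whose image $\mathfrak q_j(\mathcal F)$ is an ideal of $\mathcal O_B$.
\item Your map $\mathcal O_X\to\mathcal{E}\!xt^j(\mathcal O_{Y_j},\omega_X)\otimes\omega_X^{-1}\otimes\det$ needs $\omega_X$ invertible, i.e.\ $X$ Gorenstein.
\item Regularity of $s_{j+1}$ on $\mathcal F_j^{B\mathrm{tf}}$ makes $\mathcal E\mathcal F_j^{B\mathrm{tf}}/s_{j+1}$ inject into the leading Ext of $\mathcal F_{j+1}$, compatibly with evaluation.
\item Hence $\mathfrak a_j\,\mathfrak q_j(\mathcal F_j)\subseteq\mathfrak q_j(\mathcal F_j^{B\mathrm{tf}})\subseteq\mathfrak q_{j+1}(\mathcal F_{j+1})$.
\item Start from $\mathfrak q_0(\omega_X)=\mathcal O_B$ and end with $\mathfrak q_r(\mathcal F_r^{B\mathrm{tf}})=0$. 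The latter holds because this sheaf is $0$ if $r<n$ by (2), and supported off $|B|$ if $r=n$. Together these yield $\prod_j\mathfrak a_j\cdot\mathcal O_B=0$.
\end{itemize}

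\textbf{Your last concern is unnecessary.} Working in $\mathcal O_B=\mathcal O_X/\mathcal J$ throughout disposes of it. Since $s_{j+1}\in\mathcal J$, the quotient $\mathcal O_{B_j}\to\mathcal O_B$ factors through $\mathcal O_{B_{j+1}}$, so $(s_1,\ldots,s_r)$ never has to be compared with $\mathcal J$. The remedies you propose for that comparison are not available anyway. The $s_i$ are fixed data, and neither $\mathcal J\otimes L_i$ nor $\mathcal J\otimes A$ is assumed globally generated.
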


As an application of Theorem~\ref{thm:unified-intersection}, we can obtain the following consequence by choosing suitable linear combinations of the generating sections.

\begin{corollary}
\label{thm:fewer-generators}
Let $X$ be a projective Cohen-Macaulay scheme of pure dimension $n$ over a field $k$. Let $L$ be an ample line bundle such that $\mathcal J\otimes L$ is globally generated by $m$ sections.
For any ample line bundle $A$, one has $(\sqrt{\mathcal J})^{d} \subseteq\mathcal J$ for
\begin{equation*}
d = 
\begin{cases}
   \deg (c_1(L)^m c_1(A)^{n-m}\cap[X]), & m<n; \\
   \deg (c_1(L)^n\cap[X]), & m\geq n.
\end{cases}
\end{equation*}
In particular, if $L=A^a$ with $a\geq1$, then $(\sqrt{\mathcal J})^{\deg_A X \cdot a^{\min\{m,n\}}}\subseteq\mathcal J$.
\end{corollary}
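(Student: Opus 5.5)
We deduce the corollary from Theorem~\ref{thm:unified-intersection} with $L_1=\dots=L_r=L$ and $r=\min\{m,n\}$. The only task is to choose sections $s_1,\dots,s_r\in H^0(X,\mathcal J\otimes L)$ that satisfy conditions (1) and (2). For these we take general linear combinations of the $m$ generating sections $t_1,\dots,t_m$. Then
\[
d=\deg\left(c_1(L)^r c_1(A)^{n-r}\cap[X]\right),
\]
which is the stated value in both cases. When $L=A^a$ this becomes $a^{r}\deg_A X$, giving the final assertion.

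\textbf{Dimension count.} Let $V\subseteq H^0(X,\mathcal J\otimes L)$ be the span of the $t_i$. The map $X\setminus|B|\to\mathbb P(V^\vee)$ given by $V$ is a morphism, since $V$ generates $\mathcal J\otimes L$ and this sheaf is invertible ($\cong L$) off $|B|$. We then cut successively.
\begin{itemize}
\item At step $j$, suppose every irreducible component of $B_{j-1}\setminus|B|$ has dimension $\le n-j+1$.
\item The sections in $V$ vanishing identically on a given component form a proper linear subspace, because $V$ generates at each point off $|B|$.
\item There are finitely many components, so a section outside finitely many proper subspaces cuts each component properly, dropping its dimension by one or making it empty.
\end{itemize}
This gives (1) for all $j\le r$. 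If $m<n$, so $r=m$, the $m$ sections $s_1,\dots,s_m$ then form a basis of $V$. Their common zero locus off $|B|$ is the locus where all of $V$ vanishes, which is empty, so (2) holds. If $m\ge n$, then $r=n$ and condition (2) is vacuous.

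\textbf{Main obstacle: finite fields.} When $k$ is finite, avoiding finitely many proper subspaces of $V$ may be impossible. Our plan is to base change to an infinite extension $K\supseteq k$, for instance $k(t)$ or $\bar k$, and to check that both hypotheses and conclusion are stable under this change.
\begin{itemize}
\item $X_K$ is still projective, Cohen--Macaulay and of pure dimension $n$, since field extension is flat with Cohen--Macaulay fibers.
\item Ampleness, global generation by the $t_i$, and intersection numbers are all preserved.
\item Radicals commute with flat base change up to nilpotents: $\sqrt{\mathcal J}\,\mathcal O_{X_K}\subseteq\sqrt{\mathcal J\mathcal O_{X_K}}$.
\item Inclusion of ideal sheaves descends by faithful flatness.
\end{itemize}
Applying Theorem~\ref{thm:unified-intersection} on $X_K$ therefore gives
\[
(\sqrt{\mathcal J})^d\mathcal O_{X_K}\subseteq\mathcal J\mathcal O_{X_K},
\]
and hence $(\sqrt{\mathcal J})^d\subseteq\mathcal J$ on $X$.
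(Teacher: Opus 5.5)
Your proposal is correct and follows the paper's proof: pass to an infinite field such as $k(t)$ and descend by faithful flatness, take $r=\min\{m,n\}$ general linear combinations of the generators, and apply Theorem~\ref{thm:unified-intersection} with $L_1=\cdots=L_r=L$. The differences are cosmetic. You secure condition (1) by making each new section nonvanishing at the generic points of the components of $B_{j-1}\setminus|B|$, whereas the paper uses the associated points, i.e.\ regularity. For condition (2) when $m<n$, your claim that the $s_i$ ``form a basis of $V$'' needs a small justification because the $t_i$ need not be linearly independent. It is cleaner to do as the paper does and choose the coefficient matrix invertible, or to observe that $s_j$ automatically leaves the span of $s_1,\dots,s_{j-1}$ as long as $B_{j-1}\setminus|B|\neq\varnothing$.
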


By taking $A=L$, Corollary \ref{thm:fewer-generators} yields the following result with a sharp bound, which gives an affirmative answer to Conjecture \ref{conj}.

\begin{corollary}
\label{thm:degree}
Let $X$ be a projective Cohen-Macaulay scheme of pure dimension $n$ over a field $k$. If $L$ is ample and $\mathcal J\otimes L$ is globally generated, then
\[
 (\sqrt{\mathcal J})^{\deg_LX}\subseteq\mathcal J.
\]
\end{corollary}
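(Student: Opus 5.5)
The plan is to deduce Corollary~\ref{thm:degree} directly from Corollary~\ref{thm:fewer-generators} with $A=L$. Since $X$ is projective and $\mathcal J\otimes L$ is globally generated, and $H^0(X,\mathcal J\otimes L)\subseteq H^0(X,L)$ is finite dimensional, $\mathcal J\otimes L$ is generated by some finite number $m$ of sections (for instance a basis of $H^0(X,\mathcal J\otimes L)$). If $m\ge n$, Corollary~\ref{thm:fewer-generators} gives the exponent $d=\deg(c_1(L)^n\cap[X])=\deg_LX$. If $m<n$, it gives $d=\deg(c_1(L)^m c_1(L)^{n-m}\cap[X])=\deg(c_1(L)^n\cap[X])=\deg_LX$. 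In either case $(\sqrt{\mathcal J})^{\deg_LX}\subseteq\mathcal J$.

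A few formal points need checking. First, $\deg_LX$ should be read as $\deg(c_1(L)^n\cap[X])$, which is a positive integer since $L$ is ample and $X$ has pure dimension $n$. Second, there is a degenerate case, $\mathcal J=\mathcal O_X$ (possibly with $m=0$), where the conclusion is trivial. The case $\mathcal J=0$ can occur only if $X=\varnothing$, because $\mathcal J\otimes L$ globally generated with no nonzero sections forces $\mathcal J=0$; then $\sqrt{\mathcal J}$ is the nilradical, and the statement needs the general machinery. Here I would simply rely on Corollary~\ref{thm:fewer-generators} being stated for arbitrary $\mathcal J$ with $\mathcal J\otimes L$ generated by $m$ sections, including $m=0$, in which case $d=\deg(c_1(A)^n\cap[X])=\deg_LX$ as well.

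There is no real obstacle beyond the earlier results. The only step needing care is the reduction to finitely many generating sections, which is immediate from finite dimensionality of global sections on a projective scheme.
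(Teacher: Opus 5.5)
Your argument is the paper's: Corollary~\ref{thm:degree} is obtained by taking $A=L$ in Corollary~\ref{thm:fewer-generators}, and both cases $m<n$ and $m\geq n$ give $\deg(c_1(L)^n\cap[X])=\deg_LX$. One side remark is wrong, though harmlessly so: $\mathcal J=0$ is allowed on any nonempty $X$ (the zero sheaf is generated by no sections), but that case is Theorem~\ref{thm:unified-intersection} with $r=0$ and $B=X$, which is exactly the $m=0$ case of Corollary~\ref{thm:fewer-generators} that you end up invoking.
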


Applying Theorem~\ref{thm:unified-intersection} with suitable powers of an ample line bundle $L$, we obtain the following bounds in terms of the powers of $L$ and the degree of the variety with respect to $L$.

\begin{corollary}
\label{thm:mixed-degrees}
Let $X$ be an integral projective Cohen-Macaulay scheme of dimension $n$ over a field $k$. Let $L$ be an ample and globally generated line bundle, and let $f_i\in H^0(X,L^{a_i})$ be nonzero sections with $a_1\geq\cdots\geq a_m\geq1$, which generate $\mathcal J$.
Then one has $(\sqrt{\mathcal J})^{d} \subseteq\mathcal J$ for
\begin{equation*}
d = 
\begin{cases}
   \deg_L X \cdot a_1\cdots a_m, & m \leq n; \\
   \deg_L X \cdot a_1\cdots a_{n-1}a_m, & m>n>1; \\
   \deg_L X \cdot a_m, & n=1.
\end{cases}
\end{equation*}
\end{corollary}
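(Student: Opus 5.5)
We deduce Corollary~\ref{thm:mixed-degrees} from Theorem~\ref{thm:unified-intersection}, applied with $A=L$ and with line bundles $L_i$ equal to suitable powers $L^{b_i}$. The sections $s_i$ will be chosen as linear combinations of the $f_j$, made homogeneous of the same degree by multiplying with sections of $L$. Since $L$ is globally generated and $\mathcal J=(f_1,\dots,f_m)$, the sheaf $\mathcal J\otimes L^{b}$ is globally generated for every $b\geq a_1$. More precisely, $\mathcal J\otimes L^b$ is generated by the sections $f_j\,g$ with $g\in H^0(X,L^{b-a_j})$, and this holds as soon as $b\ge a_j$ for the $f_j$ used. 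The key point is that the $i$-th cut only needs the first few generators at the correct degree. We therefore set
\[
s_i=\sum_{j\ge i} f_j\, g_{ij},\qquad g_{ij}\in H^0(X,L^{a_i-a_j}),\qquad s_i\in H^0(X,\mathcal J\otimes L^{a_i}).
\]
This uses $a_i\ge a_j$ for $j\ge i$, and we take $g_{ij}$ general. Equivalently, $s_i$ is a general element of the linear system $V_i$ spanned by $f_jH^0(L^{a_i-a_j})$ for $j\geq i$. The intersection number is then $\deg(c_1(L)^n\cap[X])\prod_{i\le r} a_i$, which gives the stated bounds.

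\emph{Case $m\le n$.} Take $r=m$ and $L_i=L^{a_i}$. Condition (1) is a Bertini-type dimension count. Off $|B|$, the system $V_i$ restricted to $X\setminus|B|$ has base locus contained in $V(f_i,\dots,f_m)$. Inductively, we take $s_i$ to contain no irreducible component of $B_{i-1}\setminus|B|$ that is not contained in $V(f_i,\dots,f_m)$. If a component $C$ of $B_{i-1}\setminus |B|$ lies in $V(f_i,\ldots,f_m)$, then, because $C\subseteq V(s_1,\dots,s_{i-1})$ and $s_\ell\equiv f_\ell g_{\ell\ell}+\dots$, an induction shows $C\subseteq V(f_1,\dots,f_m)=|B|$, which is a contradiction. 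For this step we normalize $g_{\ell\ell}$ to be a nonzero constant, so that $s_\ell=f_\ell+\sum_{j>\ell}f_jg_{\ell j}$. Then on $V(f_i,\dots,f_m)\cap V(s_1,\dots,s_{i-1})$ we successively get $f_{i-1}=0,\dots,f_1=0$. Hence a general choice cuts every component properly, and $\dim(B_j\setminus|B|)\le n-j$. For condition (2) when $r=m<n$: on $B_m$ we have $s_m=f_m=0$, then $f_{m-1}=0$, and so on, so $B_m=|B|$ set-theoretically. The number is $d=\deg_LX\cdot a_1\cdots a_m$. We work over an infinite field, if necessary after a base change, which preserves the inclusion $(\sqrt{\mathcal J})^d\subseteq\mathcal J$ by faithful flatness and since radicals commute with separable/perfect extensions. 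This last point needs care in the inseparable case; alternatively one checks that the theorem itself tolerates base change.

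\emph{Case $m>n>1$.} Take $r=n$, $L_i=L^{a_i}$ for $i<n$ and $L_n=L^{a_n}$, but now $s_i$ is a general element of the system spanned by all $f_j$ with $j\ge i$. Condition (1) for $j\le n-1$ is as before. For $j=n$ we only need $\dim(B_n\setminus|B|)\le 0$, which follows from a general section of the system spanned by $f_n,\dots,f_m$ cutting properly. This would give $a_1\cdots a_n$. To reach $a_1\cdots a_{n-1}a_m$ instead, we take the last section of degree $a_m$. Set $L_n=L^{a_m}$, but allow only those generators $f_j$ for which $a_j=a_m$, together with $f_jg$ terms, which is impossible for $a_j>a_m$. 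So we need a different trick: reorder so that the last cut is by a general combination of degree $a_m$ of the single generator $f_m$. In that case condition (1) at $j=n$ fails only if $f_m$ vanishes on a curve component of $B_{n-1}\setminus|B|$. The main obstacle is arranging the first $n-1$ cuts, each a general combination using all $f_j$ with $j\ge i$ in degree $a_i$, so that the resulting curve $B_{n-1}\setminus |B|$ has no component inside $V(f_m)$. I would try to achieve this by making the first $n-1$ systems contain $f_m$-free generators, cutting first into $V(f_m)$ and arguing by dimension that $V(f_m)\cap B_{n-1}$ is finite off $|B|$.

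\emph{Case $n=1$.} Take $r=1$, $L_1=L^{a_m}$ and $s_1=f_m$. Condition (1) holds because $X$ is integral and $f_m\ne0$, so $V(f_m)$ is finite. This gives $d=\deg_LX\cdot a_m$.
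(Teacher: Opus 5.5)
Your cases $m\le n$ and $n=1$ are fine. For $m\le n$ you run the same triangular avoidance argument as the paper, except that $f_m$ is cut last rather than first, and the order of cuts does not affect the intersection number. Your worry about inseparable base change is unnecessary. The containment $(\sqrt{\mathcal J})^d\mathcal O_{X_{k(t)}}\subseteq(\sqrt{\mathcal J\mathcal O_{X_{k(t)}}})^d$ is trivial, so faithful flatness alone descends the result, and $X_{k(t)}$ remains integral and Cohen--Macaulay.

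\textbf{The gap is the case $m>n>1$, which you do not prove.} Your first choice of sections gives only $a_1\cdots a_n$. For the choice ending with $s_n=f_m$, you name the obstacle, a curve component of $B_{n-1}\setminus|B|$ lying inside $V(f_m)$, but do not remove it. Because $f_m$ is fixed, a general choice of $s_1,\ldots,s_{n-1}$ that is only required to cut $X\setminus|B|$ properly gives no control over this.

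\textbf{The missing idea.} Use the symmetry of the intersection number and cut by $f_m$ \emph{first}. The section $s_1=f_m$ is automatically regular because $X$ is integral. Then, as in the paper, take
\[
s_{j+1}=f_j+\sum_{i=j+1}^{m-1}u_{ji}f_i,\qquad u_{ji}\in H^0(X,L^{a_j-a_i}),\quad 1\le j<n,
\]
with $L_1=L^{a_m}$ and $L_{j+1}=L^{a_j}$. At a point of $V(s_1,\ldots,s_j)\setminus|B|$, the sections $f_j,\ldots,f_{m-1}$ cannot all vanish. Otherwise back-substitution through $s_j,\ldots,s_2$ kills $f_{j-1},\ldots,f_1$, and $s_1$ kills $f_m$. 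Hence vanishing of $s_{j+1}$ at an associated point off $|B|$ is empty or a proper affine condition (using global generation of $L^{a_j-a_i}$), and it can be avoided.

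\textbf{How to repair your own ordering.} Your last sentence gestures at this, but it is exactly the step that needed an argument. For each $i<n$, require in addition that $s_i$ not vanish identically on any component of $V(f_m)\cap B_{i-1}$ off $|B|$. The same back-substitution, now also using $f_m=0$, shows this is again a proper affine condition. Since $V(f_m)$ has pure dimension $n-1$, this gives $\dim\bigl(V(f_m)\cap B_{n-1}\setminus|B|\bigr)\le 0$, which is condition (1) of Theorem~\ref{thm:unified-intersection} at $j=n$.
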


The exponents in the preceding three corollaries are sharp.
Indeed, Example~\ref{ex:ambient-sharp} realizes the bound in Corollary~\ref{thm:mixed-degrees}, taking all degrees equal shows that the bound in Corollary~\ref{thm:fewer-generators} is sharp, and taking them all equal to one attains the exponent $\deg_L X$ in Corollary~\ref{thm:degree}. We also note that the Cohen-Macaulay hypothesis is essential for these sharp bounds, see Example~\ref{ex:cm-hypothesis}.

For $X=\mathbb P^n$, $L=\mathcal O_{\mathbb P^n}(1)$, $n\geq2$, and $a_i\geq3$, the exponent in Corollary~\ref{thm:mixed-degrees} agrees with Koll\'ar's bound for homogeneous ideals \cite[Theorem~1.5]{Kollar88}.
Koll\'ar's conclusion also controls the original homogeneous ideal at the vertex of the affine cone.
For $n=1$ and $m>1$, the respective exponents are $a_m$ for the ideal-sheaf assertion and $a_1+a_m-1$ for the homogeneous ideal assertion.

On a projective scheme, containment of ideal sheaves does not alone imply containment in the original homogeneous ideal: one must also control the contribution at the vertex of the affine cone. Our second main theorem supplies this additional, control which gives the optimal exponent under the Cohen-Macaulay hypothesis.

\begin{theorem}
\label{thm:graded-homogeneous-containment}
Let $S=\bigoplus_{\nu=0}^{\infty}S_\nu$ be a finitely generated standard graded Cohen-Macaulay domain over a field $k$, with $S_0=k$. 
Denote $Y=\operatorname{Proj}S$ and $n=\dim S-1\geq1$. Let $I=(f_1,\ldots,f_m)$, where the $f_i$ are homogeneous elements of degrees
$a_1\geq\cdots\geq a_m\geq1$. Set
\[d=\max\left\{
 \deg_{\mathcal O(1)} Y\cdot a_m a_1 \cdots a_{\min\{m,n\}-1},\
 \deg_{\mathcal O(1)} Y-1+a_m+\sum_{i=1}^{\min\{m-1,n\}}(a_i-1)
 \right\}.\]
Then $(\sqrt I)^d\subseteq I$.
\end{theorem}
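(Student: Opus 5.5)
We split the containment $(\sqrt I)^d\subseteq I$ into two parts: the part away from the irrelevant ideal $\mathfrak m=S_+$, and the part at the vertex. Write $J=(\sqrt I)^d$. Then $I\subseteq J$ up to saturation will come from the projective results, and the vertex part will come from a regularity/degree count.

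\textbf{Step 1 (projective part).} On $Y=\operatorname{Proj}S$, which is integral and Cohen--Macaulay because $S$ is a CM domain, set $\mathcal J=\widetilde I$. Since $\widetilde{\sqrt I}=\sqrt{\mathcal J}$, Corollary~\ref{thm:mixed-degrees} with $L=\mathcal O(1)$ gives $(\sqrt{\mathcal J})^{d_1}\subseteq\mathcal J$. Here $d_1=\deg Y\cdot a_m a_1\cdots a_{\min\{m,n\}-1}$ is exactly the first term in the maximum; for $n=1$ one uses $a_m\le a_m$, and for $m\le n$ one uses $a_1\cdots a_m$. Hence $(\sqrt I)^{d}\subseteq I^{\mathrm{sat}}=I:\mathfrak m^\infty$.

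\textbf{Step 2 (reduction to a degree bound).} It remains to show that every homogeneous $g\in(\sqrt I)^d$ with $g\in I^{\mathrm{sat}}$ lies in $I$. Since $\sqrt I$ is generated in degrees $\ge1$, we have $(\sqrt I)^d\subseteq S_{\ge d}$. It therefore suffices to show $(I^{\mathrm{sat}})_\nu=I_\nu$ for all $\nu\ge d_2:=\deg Y-1+a_m+\sum_{i=1}^{\min\{m-1,n\}}(a_i-1)$, that is, $H^0_{\mathfrak m}(S/I)_\nu=0$ for $\nu\ge d_2$.

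To get this, we first replace $f_1,\dots,f_m$ by a better system of generators. After extending $k$ to an infinite field, which is harmless by flatness, we choose homogeneous combinations $g_i=f_i+\sum_{j>i}h_{ij}f_j$ of degree $a_i$, with $h_{ij}$ generic. For $i\le \min\{m-1,n\}$ we arrange that $g_1,\dots,g_i$ cut down dimension properly away from $V(I)$, as in condition (1) of Theorem~\ref{thm:unified-intersection}. This is the same general-position argument used to derive Corollary~\ref{thm:mixed-degrees}.

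\textbf{Step 3 (vertex control).} Let $t=\min\{m-1,n\}$ and $K=(g_1,\dots,g_t)$. We would bound $\operatorname{reg}$ of the relevant part of $S/K$ via the Koszul complex on $g_1,\dots,g_t$ over the CM ring $S$. Since $S$ is CM of dimension $n+1$ with $\operatorname{reg}S\le\deg Y-1$ (the standard bound $\operatorname{reg}\le e-\operatorname{codim}$ for CM standard graded domains, i.e.\ $a$-invariant $+\dim\le \deg-1$), a Koszul/mapping-cone argument gives vanishing of local cohomology $H^0_{\mathfrak m}$ of the quotient in degrees $\ge \deg Y-1+\sum_{i\le t}(a_i-1)+1$.

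Adding the last generator, of degree $a_m$, accounts for the remaining shift $a_m-1$, which produces exactly $d_2$. The leading Ext-sheaf purity from the paper's method is needed here. When $g_1,\dots,g_t$ fail to be a regular sequence, because of the components of $V(I)$ of small codimension, the excess components must not contribute $H^0_{\mathfrak m}$ in high degree. I would handle this by comparing $S/K$ with $S/I$ through the exact sequence $0\to I/K\to S/K\to S/I\to0$. The $H^1_{\mathfrak m}(I/K)$ term is controlled by the same Koszul evaluation maps as in the proof of Theorem~\ref{thm:unified-intersection}.

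\textbf{Main obstacle.} I expect the hard step to be Step 3: obtaining the precise vanishing $H^0_{\mathfrak m}(S/I)_{\ge d_2}=0$ without assuming a regular sequence. I would first try to prove it for $m\le n+1$ with $g_1,\dots,g_{m-1}$ a regular sequence off $V(I)$, and then extend to the general case by induction on $m$, adding the generators $f_j$ with $j>n$ one at a time, each contributing degree at most $a_m$.

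Combining Steps 1 and 2 then gives $(\sqrt I)^d\subseteq I^{\mathrm{sat}}\cap S_{\ge d}\subseteq I$.
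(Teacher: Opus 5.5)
Your reduction in Steps~1--2 is logically valid, but it replaces the theorem by a statement that is false. The claim that $H^0_{\mathfrak m}(S/I)_\nu=0$ for all $\nu\ge d_2$ bounds the saturation degree of $I$ itself, and this fails once $V_Y(I)$ has excess components.

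Take $S=k[x,y,z,w]$, so $n=3$ and $\deg Y=1$, and let $I=(x^3,y^3,f)$ with $f=xz^2-yw^2$. Then $m=3$, all $a_i=3$, and $d_2=7$. For $u=xy^2zw^2$ one has $uy\in(y^3)$, $uz=y^2w^2f+y^3w^4$, $ux=x^3yz^3-x^2yz\,f$ and $uw^2=x^3z^5-(x^2z^3+xyzw^2)f$. Hence $u\,\mathfrak m^2\subseteq I$ and $uw\in I^{\mathrm{sat}}$. On the other hand, $I$ is homogeneous for the grading of exponent vectors by $\mathbb Z^4/\mathbb Z(1,-1,2,-2)$. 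In the degree of $uw$ the only monomials are $xy^2zw^3$ and $x^2yz^3w$, and $I$ meets that degree only in the span of $xyzw\,f=x^2yz^3w-xy^2zw^3$. So $uw\notin I$ and $H^0_{\mathfrak m}(S/I)_7\neq0$, even though $(\sqrt I)^5=(x,y)^5\subseteq I$ trivially.

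The same chain argument for $(x^e,y^e,xz^{e-1}-yw^{e-1})$ gives a nonzero class of $H^0_{\mathfrak m}(S/I)$ in degree $e^2-2$, against $d_2=3e-2$. The failure occurs already for $m\le n$, with $x^e,y^e$ an honest regular sequence, which is the base case of your proposed induction. So no Koszul or mapping-cone count of the kind sketched in Step~3 can give a saturation bound linear in the $a_i$. Replacing $d_2$ by $d$ would keep the reduction valid. It is still a saturation-degree bound for an arbitrary $I$, which the theorem does not assert and your outline gives no route to.

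The missing ideas are to keep the radical power throughout and to put the vertex on a projective scheme. The paper applies Theorem~\ref{thm:unified-intersection} not on $Y$ but on the projective cone $X=\operatorname{Proj}S[t]$, with $\mathcal J=\widetilde{IS[t]}$. This $X$ is integral, Cohen--Macaulay, of dimension $n+1$ and degree $\deg Y$. Since $D_+(t)\cong\operatorname{Spec}S$ and the vertex is a point of $B$, the output is a containment in $I$ itself, not in $I^{\mathrm{sat}}$. For $m\le n$ this already gives exponent $\deg Y\cdot a_1\cdots a_m=d$.

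For $m>n$, the paper runs the torsion-removal and cut construction only for $s_1,\dots,s_n$, with a suitable power of $\sqrt I$ killing each torsion cokernel $\mathcal Q_j$. It ends with a one-dimensional $\mathcal F_n$ whose evaluation image is annihilated by $K^{\mathrm{sat}}$, where $Z_n\cap Y$ is finite. A homogeneous $\varphi$ in the remaining power of $\sqrt I$ vanishes, by the length bound, on the components of $Z_n\cap Y$ inside $V_Y(I)$. On the other components one divides by $s_{n+1}$ and lifts to $S$. The lift exists because $\deg\varphi-a_n$ is at least the regularity of the one-dimensional Cohen--Macaulay ring $S/K^{\mathrm{sat}}$. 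This gives $\varphi\in K^{\mathrm{sat}}+(s_{n+1})$, which kills the image. The additive term of $d$ enters only through this lifting inequality, never as a saturation bound for $I$.
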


For $S=k[x_0,\ldots,x_n]$, Theorem~\ref{thm:graded-homogeneous-containment} recovers Koll\'ar's bound \cite[Theorem 1.5]{Kollar88} without the assumption that every generator has degree at least $3$. Jelonek~\cite{Jelonek05} establishes effective bounds for homogeneous ideal containment without requiring the homogeneous coordinate ring to be Cohen-Macaulay. Jelonek's bound is optimal when $m \leq n$, whereas for $m>n$, Koll\'ar's theorem \cite[Theorem 1.5]{Kollar88} permits a slightly smaller exponent. The Cohen-Macaulay setting also appears in Sombra's work \cite[Theorem~1.8]{Sombra99}. 

The first term in the expression for $d$ in Theorem~\ref{thm:graded-homogeneous-containment} is the bound that already controls the projective ideal-sheaf inclusion. The second term accounts for the additional graded obstruction that corresponds to the term $a_1 + a_m - 1$ in \cite[Theorem 1.5]{Kollar88}. Both terms are necessary as Example~\ref{ex:graded-radical-sharpness} shows.

By homogenizing the polynomial functions on affine varieties, we obtain the following affine corollary from the preceding projective and graded bounds.

\begin{corollary}
\label{cor:graded-affine}
Let $V\subseteq\mathbf A^N_k$ be an integral affine variety of dimension $n\geq1$, and let $Y\subseteq\mathbb P^N_k$ be its projective closure.
Assume that the homogeneous coordinate ring $S$ of $Y$ is Cohen–Macaulay.
Let $f_1,\ldots,f_m\in k[V]$ be nonconstant polynomial functions of degrees $a_1\geq\cdots\geq a_m\geq1$, and $\mathfrak a=(f_1,\ldots,f_m)$. 
Then $$(\sqrt{\mathfrak a})^{\deg V \cdot a_m a_1\cdots a_{\min\{m,n\}-1}}\subseteq\mathfrak a.$$ 
If $\mathfrak a=k[V]$, then there exist $g_i\in k[V]$ satisfying
\begin{equation}\label{eq:graded-affine-certificate}
 1=\sum_{i=1}^m g_if_i,  \quad \deg g_i+a_i\leq d,
\end{equation}
where
\[
d=\max\left\{
\deg V\cdot a_m a_1 \cdots a_{\min\{m,n\}-1},
\ 
\deg V-1+a_m+\sum_{i=1}^{\min\{m-1,n\}}(a_i-1)
\right\}.
\]
In particular, the conclusion holds if $k$ is algebraically closed and the $f_i$ have no common zero.
\end{corollary}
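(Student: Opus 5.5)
We deduce Corollary~\ref{cor:graded-affine} from Theorem~\ref{thm:graded-homogeneous-containment} by homogenization. Let $x_0$ be the hyperplane coordinate with $V=Y\cap\{x_0\neq0\}$. Write $S=k[x_0,\dots,x_N]/I_Y$ for the homogeneous coordinate ring. Since $Y$ is the projective closure of the integral $V$, the ring $S$ is a standard graded domain. It is Cohen--Macaulay by hypothesis, and $\dim S-1=n$ with $\deg_{\mathcal O(1)}Y=\deg V$. Moreover $x_0$ is a nonzerodivisor on $S$, and dehomogenization identifies $S/(x_0-1)\cong S[x_0^{-1}]_0\cong k[V]$.

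For each $f_i$ of degree $a_i$ (degree meaning the minimal degree of a representative in $k[x_1,\dots,x_N]$), we take its homogenization $F_i\in S_{a_i}$. Then $F_i$ is homogeneous of degree $a_i$ and dehomogenizes to $f_i$. Set $I=(F_1,\dots,F_m)\subseteq S$, and apply Theorem~\ref{thm:graded-homogeneous-containment} to get $(\sqrt I)^{d'}\subseteq I$ with $d'$ the maximum displayed there.

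For the first claim, the plan is to use only the ideal-sheaf level, which costs the first term alone. Let $h\in\sqrt{\mathfrak a}$. After multiplying by a power of $x_0$, its homogenization $H$ lies in $\sqrt{I}$ on the chart $\{x_0\ne0\}$, that is, $H$ lies in $\sqrt{I S_{x_0}}$. Products of $e=\deg V\cdot a_ma_1\cdots a_{\min\{m,n\}-1}$ such elements lie in $IS_{x_0}$. This follows from Corollary~\ref{thm:mixed-degrees}, or from the first term of Theorem~\ref{thm:graded-homogeneous-containment}, which controls the sheaf inclusion $(\sqrt{\mathcal J})^e\subseteq\mathcal J$ on $Y$ with $\mathcal J=\widetilde I$. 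One caveat: Corollary~\ref{thm:mixed-degrees} needs $Y$ Cohen--Macaulay, which holds because $S$ is CM. Restricting this sheaf inclusion to the affine chart $D_+(x_0)=V$ gives $(\sqrt{\mathfrak a})^e\subseteq\mathfrak a$, since $\widetilde I|_V$ is the ideal sheaf of $\mathfrak a$. Localization commutes with radicals and powers, so this step is formal.

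For the certificate, suppose $\mathfrak a=k[V]$. Then $V(I)\subseteq\{x_0=0\}$, so $x_0\in\sqrt I$ and hence $x_0^{d'}\in I$. Writing $x_0^{d'}=\sum G_iF_i$ with $G_i$ homogeneous of degree $d'-a_i$ and setting $x_0=1$ gives $1=\sum g_if_i$ with $\deg g_i\le d'-a_i$. Here degree on $k[V]$ is measured by the filtration induced from $S$ via dehomogenization, and this filtration agrees with the polynomial degree of representatives. This yields \eqref{eq:graded-affine-certificate}, since $d'=d$.

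Finally, when $k$ is algebraically closed and the $f_i$ have no common zero on $V$, the Nullstellensatz gives $\mathfrak a=k[V]$.

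The main obstacle is bookkeeping with degrees. We must check that the homogenization $F_i$ has degree exactly $a_i$ in $S$, which requires defining the degree of $f_i\in k[V]$ as the minimum over representatives. We must also check that dehomogenizing a degree-$(d-a_i)$ form yields a function of degree at most $d-a_i$. Both follow from the identification $S_\nu\cong\{g\in k[V]:\deg g\le\nu\}$ given by $G\mapsto G|_{x_0=1}$. This identification in turn uses that $x_0$ is a nonzerodivisor and that $Y$ is the closure of $V$, so no component of $Y$ lies in $\{x_0=0\}$.
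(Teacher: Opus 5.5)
Your proposal is correct and follows the paper's proof. For the first claim you apply Corollary~\ref{thm:mixed-degrees} on $Y$ and restrict to $D_+(x_0)=V$; for the certificate you apply Theorem~\ref{thm:graded-homogeneous-containment} to $x_0\in\sqrt I$ and dehomogenize. Your explicit identification $S_\nu\cong\{g\in k[V]:\deg g\le\nu\}$ is exactly the degree bookkeeping that the paper's one-line ``dehomogenization proves'' leaves implicit.
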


The degree of a function in $k[V]$ is the smallest total degree of any polynomial representing it on $V$, relative to the given embedding $V\subseteq\mathbf A^N_k$. The estimate \eqref{eq:graded-affine-certificate} controls the degree of each $g_{i}$ and implies $\deg(g_if_i)\leq d$. On a general affine variety, $\deg(g_if_i)$ and $\deg g_i+a_i$ are different in general. For example, in $k[x,t]/(xt^{\delta-1}-1)$ with $\delta\geq2$, the functions $f=x^a$ and $g=t^{(\delta-1)a}$ satisfy $\deg f+\deg g = a \cdot \deg Y$ but $fg=1$. Thus the product has degree \(0\), while the sum of the degrees can be large.

Our proofs of Theorem~\ref{thm:unified-intersection} and Theorem~\ref{thm:graded-homogeneous-containment} use a homological approach based on duality and Koszul evaluation maps. This differs from the use of the Skoda-type division theorem in Ein-Lazarsfeld \cite{EinLazarsfeld}, and from the elimination methods of Jelonek \cite{Jelonek05}. 
We develop the successive cutting and degree-counting strategy of \cite{Kollar88} by combining pure leading Ext sheaves with compatible Koszul evaluation images.
The purity and cycle properties of the leading Ext sheaves are classical facts \cite{Schenzel,Stacks}. Their role here is as follows: at each step of removing $|B|$-torsion, they convert the degree of the removed top-dimensional cycle into an annihilator, which can then be transported to the next cut.

Section~\ref{sec:2} introduces the cycles, duality, and evaluation-images. Section~\ref{sec:all-dimensions} proves Theorem~\ref{thm:unified-intersection}.
Section~\ref{sec:refined-degrees} presents the proofs of Corollary~\ref{thm:fewer-generators} and Corollary \ref{thm:mixed-degrees}, and gives sharpness examples. 
Finally, Section~\ref{sec:graded-sharp-bounds} proves Theorem~\ref{thm:graded-homogeneous-containment} and Corollary~\ref{cor:graded-affine}, and provides sharpness examples.

\section{Cycles, leading Ext, and evaluation images}
\label{sec:2} This section collects some facts used in the proof of the main theorems. We first recall associated cycles and mixed intersection degrees. Using generic lengths, we obtain bounds for annihilators of pure sheaves.
Then we apply local Grothendieck duality with the dualizing sheaf to establish purity and cycle preservation for leading Ext sheaves on Cohen-Macaulay schemes.
Finally, we introduce the notion of the evaluation image ideal. Using Koszul complexes and relative duality maps, we construct a morphism from the leading Ext sheaves to the structure sheaf $\mathcal O_B$ of the base scheme, whose image is precisely the evaluation-image ideal. This construction allows our inductive argument to proceed, and translates annihilation statements for Ext sheaves into ideal-power containments in $\mathcal O_B$.

\subsection{Cycles and mixed intersection degrees}

We now recall the cycle-theoretic formalism which provides the quantitative degree estimates underlying the inductive bound. We begin with the cycle associated to a coherent sheaf, the notion of purity, and the intersection identities used throughout the proof.

\begin{definition}[\cite{FultonIT} Example 15.1.8, \cite{Stacks} Tag 02QX]
\label{def:hilbert-coefficients}
Let $X$ be a projective scheme over a field $k$, and let $\mathcal M$ be a coherent sheaf with $\dim\operatorname{Supp}\mathcal M\leq q$.
Its associated $q$-cycle is
\[
 [\mathcal M]_q :=  \sum_{\dim\overline{\{\eta\}}=q} \operatorname{Length}_{\mathcal O_{X,\eta}}(\mathcal M_\eta)
 [\overline{\{\eta\}}].
\]
The sum runs over the generic points of the $q$-dimensional irreducible components of $\operatorname{Supp}\mathcal M$, each endowed with its reduced structure. 
For a closed subscheme $Z\subseteq X$, we write $[Z]_q=[\mathcal O_Z]_q$. We use the same notation for these cycles and their classes in the Chow group $\operatorname{CH}_q(X)$.
\end{definition}

$[\mathcal M]_q$ is effective, and it is zero if $\dim\operatorname{Supp}\mathcal M<q$. It follows from \cite[Proposition 2.5]{FultonIT} that, for an invertible sheaf $L$, the intersection with its first Chern class defines an operator $ c_1(L)\cap-:
 \operatorname{CH}_q(X)\rightarrow
 \operatorname{CH}_{q-1}(X)$.
These operators commute and are additive with respect to tensor products of invertible sheaves. 
We use their products to form intersection numbers
\[
 \deg\bigl(  c_1(L_1)\cdots c_1(L_q)\cap[\mathcal M]_q \bigr).
\]
Here $\deg$ denotes the degree of a zero-cycle, with $\deg[x]=[\kappa(x):k]$ for a closed point $x$.
When $q=0$, the product of Chern classes is understood to be empty.
These constructions apply to singular and nonreduced schemes, see~\cite[Chapters~1--2]{FultonIT}.
The usual degree of $X$ with respect to an ample line bundle $L$ is
$\deg_L X = \deg\bigl(c_1(L)^{\dim X}\cap[X]_{\dim X}\bigr)$.

We collect some basic properties of cycles and intersection numbers, which form the basis for the degree estimates in our induction.

\begin{proposition}\label{prop:cycles-ic}
Let $X$ be a projective scheme over a field $k$, and let $\mathcal M$ be a coherent sheaf with $\dim\operatorname{Supp}\mathcal M\leq q$.

\begin{itemize}[leftmargin=1.5em]
\item[(1)] For an exact sequence
$0\to\mathcal M'\to\mathcal M\to\mathcal Q\to0$, one has $[\mathcal M]_q=[\mathcal M']_q+[\mathcal Q]_q$ as cycles on $X$.

\item[(2)]
If $L_1,\ldots,L_q$ are ample line bundles, then
{ \[
\deg\bigl( c_1(L_1)\cdots c_1(L_q)\cap[\mathcal M]_q \bigr)
= \sum_{\dim\overline{\{\eta\}}=q}  \operatorname{Length}_{{\mathcal O}_{X,\eta}}(\mathcal M_\eta) \deg\bigl( c_1(L_1)\cdots c_1(L_q) \cap[\overline{\{\eta\}}] \bigr).
\]}

\noindent If $\dim\operatorname{Supp}\mathcal M = 0$, then
\[
 \deg[\mathcal M]_0
 = \sum_{x\in\operatorname{Supp}\mathcal M} \operatorname{Length}_{{\mathcal O}_{X,x}}(\mathcal M_x)[\kappa(x):k]
 = \dim_k H^0(X,\mathcal M).
\]
In particular, over an algebraically closed field, this degree is the total length of $\mathcal M$.

\item[(3)]
If $A$ is an invertible sheaf, then $[\mathcal M\otimes A]_q=[\mathcal M]_q$.

\item[(4)]
Suppose $q\geq1$, and let $L$ be an invertible sheaf with a section $s\in H^0(X,L)$ such that multiplication by $s$ induces an injection $\mathcal M\otimes L^{-1}\xrightarrow{s}\mathcal M$. 
Denote $H=V(s)$ and regard $\mathcal M|_H$ as a sheaf on $X$.
Then
\[
 \dim\operatorname{Supp}(\mathcal M|_H)\leq q-1, \quad
 [\mathcal M|_H]_{q-1}  = c_1(L)\cap[\mathcal M]_q
\]
in $\operatorname{CH}_{q-1}(X)$.
Consequently, for line bundles $L_1,\ldots,L_{q-1}$,
{ \[
\deg\bigl( c_1(L_1)\cdots c_1(L_{q-1}) \cap[\mathcal M|_H]_{q-1} \bigr)
=
 \deg\bigl( c_1(L_1)\cdots c_1(L_{q-1})c_1(L) \cap[\mathcal M]_q \bigr).
\]}
\end{itemize}
\end{proposition}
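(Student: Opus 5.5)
These four assertions are standard facts about lengths and intersection theory, and each reduces to a property of generic lengths or to a result in \cite{FultonIT}. For (1), I localize the exact sequence at each generic point $\eta$ of a $q$-dimensional component of $\operatorname{Supp}\mathcal M$. Localization is exact, so $0\to\mathcal M'_\eta\to\mathcal M_\eta\to\mathcal Q_\eta\to0$ is an exact sequence of finite-length $\mathcal O_{X,\eta}$-modules, and length is additive on such sequences. The $q$-dimensional components of $\operatorname{Supp}\mathcal M'$ and $\operatorname{Supp}\mathcal Q$ lie among those of $\operatorname{Supp}\mathcal M$, since both supports are contained in $\operatorname{Supp}\mathcal M$, which has dimension $\le q$. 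At a $q$-dimensional component of $\operatorname{Supp}\mathcal M$ that is not in $\operatorname{Supp}\mathcal M'$ (or $\operatorname{Supp}\mathcal Q$), the corresponding stalk vanishes and contributes length $0$. Comparing coefficients gives the identity of cycles. Part (3) follows in the same way: $(\mathcal M\otimes A)_\eta\cong\mathcal M_\eta$ because $A$ is locally free of rank one, so the lengths agree.

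For (2), the first formula is the linearity of the operators $c_1(L_i)\cap-$ and of $\deg$ on $\operatorname{CH}_*(X)$, applied to the cycle $[\mathcal M]_q=\sum \ell_\eta[\overline{\{\eta\}}]$; ampleness is not needed for this. For $q=0$, each point $\eta$ is a closed point $x$ and $\deg[x]=[\kappa(x):k]$ by definition. The sheaf $\mathcal M$ is then a finite direct sum of skyscraper sheaves $\mathcal M_x$ of finite length. A composition series of $\mathcal M_x$ has $\operatorname{Length}(\mathcal M_x)$ factors, each isomorphic to $\kappa(x)$, so $\dim_k\mathcal M_x=\operatorname{Length}(\mathcal M_x)[\kappa(x):k]$. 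Summing over $x$ gives $\dim_kH^0(X,\mathcal M)$. Over an algebraically closed field, every residue field of a closed point is $k$, so the degree equals the total length.

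The substantive part is (4), and I would cite it as \cite[Example~15.1.8]{FultonIT} or \cite[Tag~02QX]{Stacks}, explaining the argument briefly. The map $s:\mathcal M\otimes L^{-1}\to\mathcal M$ is injective, and its cokernel is $\mathcal M|_H$, giving the exact sequence $0\to\mathcal M\otimes L^{-1}\to\mathcal M\to\mathcal M|_H\to0$.
\begin{itemize}
\item \emph{Dimension bound.} Since $s$ is a nonzerodivisor on $\mathcal M_\eta$ at every generic point $\eta$ of a $q$-dimensional component of the support, $\mathcal M|_H$ vanishes there. Hence $\dim\operatorname{Supp}(\mathcal M|_H)\le q-1$.
\item \emph{Cycle identity.} I reduce to the case of a single integral component. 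Take a filtration of $\mathcal M$ whose successive quotients are of the form $\mathcal O_Z$-twisted sheaves, namely ideal sheaves on integral subschemes $Z$. This reduces $[\mathcal M|_H]_{q-1}=c_1(L)\cap[\mathcal M]_q$ to Fulton's key lemma that $c_1(L)\cap[Z]$ is represented by the Weil divisor $[D]$ of a Cartier divisor $D$ with $\mathcal O(D)\cong L|_Z$.
\item \emph{Handling the filtration.} Injectivity of $s$ is not preserved by arbitrary filtrations, so this passage is where care is needed. The cleanest route is the Stacks statement, which proves directly that $[\mathcal M|_H]_{q-1}=[\operatorname{coker}s]_{q-1}-[\ker s]_{q-1}$ represents $c_1(L)\cap[\mathcal M]_q$ whenever the kernel and cokernel are supported in dimension $\le q-1$. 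This Herbrand-quotient-type statement is additive in short exact sequences, which is what makes the filtration argument go through. In our situation $\ker s=0$, so the right-hand side is just $[\mathcal M|_H]_{q-1}$.
\item \emph{Degree formula.} The final displayed equality follows by applying $c_1(L_1)\cdots c_1(L_{q-1})\cap-$ and taking degrees, using that the Chern class operators commute.
\end{itemize}

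I expect the only real obstacle to be the cycle identity in (4). I would treat it by citation rather than reprove it.
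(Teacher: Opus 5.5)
Your proposal is correct and takes essentially the same route as the paper. You get (1) and (3) from additivity and twist-invariance of lengths at the generic points, (2) from linearity of $c_1(L_i)\cap-$ and $\deg$ (you are right that ampleness plays no role in that identity), and (4) from the standard length formula for a section regular on $\mathcal M$ together with its identification with $c_1(L)\cap-$; the paper cites Tags 02QG and 02SN of the Stacks Project for this last step, which is the Herbrand-quotient statement you describe, rather than the references it uses for the definition of $[\mathcal M]_q$.
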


\begin{proof}Assertions \textup{(1)} and \textup{(3)} follow from the definition of associated cycles and additivity of length~\cite[Tags 02QX and 02QZ]{Stacks}. 
Assertion \textup{(2)} follows from linearity, the definition of degree, and positivity of ample intersections \cite[Tag 0BEV]{Stacks}. Finally, \textup{(4)} follows from the local length formula for a regular section and its interpretation as intersection with the first Chern class~\cite[Tags 02QG and 02SN]{Stacks}. The intersection degree formula followed by intersection with the remaining Chern classes.
\end{proof}

A coherent sheaf is pure of dimension $q$ (cf. \cite[Definition 1.1.2]{HL10} and \cite[p. 49]{Matsumura80}) if every nonzero coherent subsheaf has support dimension $q$,
equivalently, all its associated points are generic points of $q$-dimensional support components. 
The following lemma gives multiplicity bounds for annihilators.

\begin{lemma} \label{lem:length}
Let ${\mathcal J}\subseteq{\mathcal O}_X$ be a coherent ideal sheaf, and let
$L_1,\ldots,L_q$ be ample line bundles.
If the coherent sheaf $\mathcal M$ is zero or pure of dimension $q$, and if $\operatorname{Supp}\mathcal M\subseteq |V({\mathcal J})|$, then
\[
 (\sqrt{{\mathcal J}})^{\deg\left( c_1(L_1)\cdots c_1(L_q)\cap[\mathcal M]_q \right)} \mathcal M=0.
\]
On the other hand, for any coherent sheaf $\mathcal G$ of dimension zero, one has
\[
 (\sqrt{\mathcal J})^{\deg[\mathcal G]_0}\mathcal G \subseteq\mathcal J\mathcal G.
\]
\end{lemma}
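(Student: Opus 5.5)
Write $I=\sqrt{\mathcal J}$ and $\delta=\deg\bigl(c_1(L_1)\cdots c_1(L_q)\cap[\mathcal M]_q\bigr)$. The plan is to reduce the first assertion to a local statement at the generic points of $\operatorname{Supp}\mathcal M$, using purity. If $\mathcal M=0$ there is nothing to prove. Otherwise, let $\eta_1,\dots,\eta_t$ be the generic points of the $q$-dimensional components of $\operatorname{Supp}\mathcal M$ and put $\ell_j=\operatorname{Length}_{\mathcal O_{X,\eta_j}}(\mathcal M_{\eta_j})$.

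By Proposition~\ref{prop:cycles-ic}(2),
\[
\delta=\sum_j \ell_j\deg\bigl(c_1(L_1)\cdots c_1(L_q)\cap[\overline{\{\eta_j\}}]\bigr).
\]
Each degree in the sum is a positive integer, since the $L_i$ are ample and $\overline{\{\eta_j\}}$ is an integral $q$-dimensional subvariety. Hence $\delta\ge \ell_j$ for every $j$.

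Next, work locally at $\eta_j$. Since $\operatorname{Supp}\mathcal M\subseteq|V(\mathcal J)|$, the point $\eta_j$ lies in $V(\mathcal J)$, so $I_{\eta_j}\subseteq\mathfrak m_{\eta_j}$. The stalk $\mathcal M_{\eta_j}$ has finite length $\ell_j$, so the chain $\mathcal M_{\eta_j}\supseteq\mathfrak m\mathcal M_{\eta_j}\supseteq\mathfrak m^2\mathcal M_{\eta_j}\supseteq\cdots$ is strictly decreasing until it reaches zero, by Nakayama. It therefore stabilizes at $0$ after at most $\ell_j$ steps, which gives $\mathfrak m^{\ell_j}\mathcal M_{\eta_j}=0$. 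Consequently $(I^\delta\mathcal M)_{\eta_j}=0$ for all $j$.

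Now $I^\delta\mathcal M$ is a coherent subsheaf of $\mathcal M$ that vanishes at every generic point $\eta_j$. Its support therefore has dimension $<q$. Because $\mathcal M$ is pure of dimension $q$, this subsheaf must be zero. The purity step is the one that needs care: one must check that the associated points of $\mathcal M$ are exactly the $\eta_j$. This is the stated equivalent form of purity, and it makes the argument clean.

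For the zero-dimensional assertion, let $\mathcal G$ have finite support $\{x_1,\dots,x_s\}$ and write $\ell_x=\operatorname{Length}(\mathcal G_x)$. By Proposition~\ref{prop:cycles-ic}(2), $\deg[\mathcal G]_0=\sum_x\ell_x[\kappa(x):k]\ge \ell_x$ for each $x$. It suffices to check the inclusion $(\sqrt{\mathcal J})^{\deg[\mathcal G]_0}\mathcal G\subseteq\mathcal J\mathcal G$ stalkwise at each $x$, and there are two cases.

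If $x\notin V(\mathcal J)$, then $\mathcal J_x=\mathcal O_{X,x}$, so $\mathcal J_x\mathcal G_x=\mathcal G_x$ and the inclusion is trivial.

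If $x\in V(\mathcal J)$, then $\sqrt{\mathcal J}_x\subseteq\mathfrak m_x$. The Nakayama argument above gives $\mathfrak m_x^{\ell_x}\mathcal G_x=0$, hence $(\sqrt{\mathcal J})_x^{\deg[\mathcal G]_0}\mathcal G_x=0\subseteq\mathcal J_x\mathcal G_x$.

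Since coherent subsheaves are determined by their stalks, and all stalks away from the finite support vanish, the inclusion holds globally. No purity is needed in this zero-dimensional case, which explains why the conclusion is the weaker containment in $\mathcal J\mathcal G$ rather than annihilation.
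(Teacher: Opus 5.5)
Your proof is correct and follows the same route as the paper: positivity of ample intersection numbers gives $\ell_j\le\delta$, and the finite-length/Nakayama argument kills $(\sqrt{\mathcal J})^{\delta}\mathcal M$ at each generic point. Purity then globalizes this (the paper phrases purity as the injection $\mathcal M\hookrightarrow\bigoplus_\eta\mathcal M_\eta$), and for the zero-dimensional part you check stalks of $\mathcal G$ directly rather than passing to $\mathcal G/\mathcal J\mathcal G$, which is equivalent bookkeeping. One small correction to your closing remark: zero-dimensional sheaves are automatically pure, so the weaker containment in the second assertion reflects only that $\operatorname{Supp}\mathcal G$ is not assumed to lie in $|V(\mathcal J)|$.
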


\begin{proof} For pure sheaf $\mathcal M$, by primary decomposition, there is an injection
\[
\mathcal M \hookrightarrow \oplus_{\eta\in \operatorname{Supp}(\mathcal M),\ \dim\overline{\{\eta\}}=q}\ \ \mathcal M_\eta.
\]
At each generic point $\eta$ of the support $\operatorname{Supp}\mathcal M$, the stalk $\mathcal M_\eta$ has finite length. Writing $\mathfrak m_\eta$ for the maximal ideal of $\mathcal O_{X,\eta}$, the finite-length argument gives
\[
\mathfrak m_\eta^{\operatorname{Length}(\mathcal M_\eta)} \mathcal M_\eta = 0.
\]
Since $(\sqrt{\mathcal J})_\eta \subseteq \mathfrak m_\eta$, it follows that
$(\sqrt{\mathcal J})_\eta^{\operatorname{Length}(\mathcal M_\eta)} \mathcal M_\eta = 0$.
Combining these observations and Proposition~\ref{prop:cycles-ic}, for ample line bundles $L_1,\ldots,L_q$,
\[
\deg c_1(L_1)\cdots c_1(L_q)\cap[\mathcal M]_q \geq \operatorname{Length}(\mathcal M_\eta)
\]
for every associated point $\eta$. Then purity forces $(\sqrt{{\mathcal J}})^{\deg\left( c_1(L_1)\cdots c_1(L_q)\cap[\mathcal M]_q \right)} \mathcal M$ to vanish everywhere.

For $\mathcal G$ with $\dim \mathcal G=0$, we apply the same finite-length argument at each support point of $\mathcal G/{\mathcal J}\mathcal G$.
This quotient is supported on $V({\mathcal J})$, and Proposition~\ref{prop:cycles-ic} \textup{(1)--(2)} gives 
$ \operatorname{Length}_{{\mathcal O}_{X,x}} \bigl((\mathcal G/{\mathcal J}\mathcal G)_x\bigr)
\leq  \deg[\mathcal G/{\mathcal J}\mathcal G]_0
 \leq  \deg[\mathcal G]_0$. Thus $(\sqrt{{\mathcal J}})^{\deg[\mathcal G]_0}$ annihilates $\mathcal G/{\mathcal J}\mathcal G$, which proves the second assertion.
\end{proof}

\subsection{Purity and cycle preservation of leading Ext} 

To prove the first assertion $\sqrt{\mathcal J}^d\subseteq \mathcal J$ in Theorem~\ref{thm:unified-intersection}, it suffices to show $(\sqrt{\mathcal J})^d\cdot\mathcal O_B=0$, where $B=V(\mathcal J)$ is the base scheme.
Lemma~\ref{lem:length} shows that generic multiplicities control annihilators once the sheaf is pure. To use this observation for the intermediate quotients in our induction, we therefore seek an operation that produces a pure sheaf while preserving its top-dimensional cycle. 
Local Grothendieck duality suggests such an operation.

Let $X$ be a projective Cohen-Macaulay scheme of pure dimension $n$ over $k$. Then its dualizing complex is concentrated in degree $-n$ and of the form $\omega_X[n]$ with $\omega_X$ a maximal Cohen-Macaulay coherent sheaf on $X$. We call $\omega_X$ its dualizing sheaf. Moreover, the natural homothety morphism gives $ R\mathcal Hom_X(\omega_X,\omega_X)\cong \mathcal O_X$. See~\cite[Tags 0A7B, 0AWS, and 0FVU]{Stacks}.
Recall that, for a closed immersion $i:Z\hookrightarrow X$, the right adjoint $i^!$ produces the dualizing complex $i^!(\omega_X[n])$ of $Z$. Its pushforward is computed by $i_*i^!(\omega_X[n])\cong R\mathcal Hom_X(i_*\mathcal O_Z,\omega_X[n])$. This brings Ext sheaves into the picture as the cohomology sheaves of a dualizing complex. 
For a general coherent sheaf $\mathcal M$ on $X$, we use the analogous derived dual $R\mathcal Hom_X(\mathcal M,\omega_X[n])$.
If $\dim\operatorname{Supp}\mathcal M=q$, local duality gives vanishing below degree $-q$, so truncation isolates the leading Ext sheaf:
\[
\tau^{\leq-q} R\mathcal Hom_X(\mathcal M,\omega_X[n])
\cong \mathcal{E}\!xt_X^{n-q}(\mathcal M,\omega_X)[q].
\]

The significance of this leading term is that it is pure of dimension $q$, even when $\mathcal M$ is not Cohen-Macaulay, and preserves the associated $q$-cycle. 
We therefore retain $\omega_X$ as the coefficient sheaf. Tensoring the resulting Ext sheaf with a line bundle preserves both properties. The following lemma records the statements we need.

\begin{lemma}\label{lem:leading}
Let $X$ be a projective Cohen-Macaulay scheme of pure dimension $n$ over a field $k$ with dualizing sheaf $\omega_X$, and let $0\leq q\leq n$.
For a coherent sheaf $\mathcal M$ with $\dim \operatorname{Supp} \mathcal M\leq q$, the following hold:
\begin{enumerate}[label=\textup{(\arabic*)}]
\item
${\mathcal{E}\!xt}_X^j(\mathcal M,{\omega}_X)=0$ for $j<n-q$;

\item
${\mathcal{E}\!xt}_X^{n-q}(\mathcal M,{\omega}_X)$ is zero or pure of dimension $q$;

\item
the equality of cycles $[{\mathcal{E}\!xt}_X^{n-q}(\mathcal M,{\omega}_X)]_q = [\mathcal M]_q$ holds on \(X\).
\end{enumerate}
If $\dim \operatorname{Supp} \mathcal M<q$, then
${\mathcal{E}\!xt}_X^{n-q}(\mathcal M,{\omega}_X)=0$.

\end{lemma}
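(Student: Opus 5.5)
All three assertions are local, so I will work at a point $x\in X$. Put $R=\mathcal O_{X,x}$, a Cohen-Macaulay local ring of dimension $n_x=\dim R$; since $X$ is of pure dimension $n$ and projective over a field, $\dim \overline{\{x\}}=n-n_x$. Let $M=\mathcal M_x$ and $\omega=\omega_{X,x}$. Then $\omega$ is a canonical module of $R$, so $\omega[n_x]$ is a normalized dualizing complex of $R$ up to the usual shift by $\dim\overline{\{x\}}$. The key dimension bookkeeping is
\[
\dim_R M\leq q-\dim\overline{\{x\}}=q-n+n_x .
\]
Local duality (Grothendieck), in the form $\operatorname{Ext}^j_R(M,\omega)$ being Matlis dual to $H^{n_x-j}_{\mathfrak m}(M)$ after completion, then gives everything pointwise. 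The completion is faithfully flat, and $\widehat\omega$ is the canonical module of $\widehat R$.

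For (1): if $j<n-q$, then $n_x-j>n_x-n+q\geq\dim M$, so $H^{n_x-j}_{\mathfrak m}(M)=0$ by Grothendieck vanishing, and hence $\operatorname{Ext}^j_R(M,\omega)=0$. The same computation handles the final assertion. If $\dim\operatorname{Supp}\mathcal M<q$, then $\dim M<q-n+n_x$ at every point. Hence $H^{n_x-(n-q)}_{\mathfrak m}(M)=0$, so $\mathcal Ext^{n-q}$ vanishes at every stalk.

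For (3): let $\eta$ be a generic point of a $q$-dimensional component of $\operatorname{Supp}\mathcal M$. Then $R=\mathcal O_{X,\eta}$ has dimension $n-q$, and $M$ has finite length. Here $\operatorname{Ext}^{n-q}_R(M,\omega)=\operatorname{Ext}^{\dim R}_R(M,\omega)$ is the Matlis dual of $H^0_{\mathfrak m}(M)=M$, so it has the same length. At points of dimension $q$ not in $\operatorname{Supp}\mathcal M$ the stalk is zero. By the dimension bound, $\operatorname{Supp}\mathcal Ext^{n-q}\subseteq\operatorname{Supp}\mathcal M$, so the two $q$-cycles agree.

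For (2), the main step, I will show that $E=\mathcal Ext^{n-q}(\mathcal M,\omega)$ has no associated point $x$ with $\dim\overline{\{x\}}<q$. At such an $x$, set $t=n_x-(n-q)=q-\dim\overline{\{x\}}+(n_x-n)+\dim\overline{\{x\}}$, i.e.\ $t=\dim M$ bound. Then $E_x=\operatorname{Ext}^{n_x-t}_R(M,\omega)$ with $t=q-n+n_x\geq1$ and $\dim M\leq t$. I will use the standard fact (Schenzel; Stacks, on the depth of $K^t(M)$) that $K^t(M):=\operatorname{Ext}^{n_x-t}_R(M,\omega)$ is zero or satisfies $\dim K^t(M)=t$ with every associated prime of dimension $t$.

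My plan for proving this is to reduce to $R$ complete and write $R$ as a quotient of a Gorenstein ring $G$ of dimension $n_x$. This is possible because $R$ is Cohen-Macaulay with canonical module, so one can take $\omega=\operatorname{Ext}^{c}_G(R,G)$. By the change-of-rings spectral sequence, $K^t(M)=\operatorname{Ext}^{n_x-t}_G(M,G)$. I then argue by induction on $t$. When $t=1$, I need $\mathfrak m\notin\operatorname{Ass}$, which follows from $\operatorname{Hom}(k,K^t(M))$ vanishing via duality. For general $t$, I take an element $y$ that is $K^t$-regular modulo low dimension and use the long exact sequence of $0\to M\xrightarrow{}M\to M/yM$. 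Applying this after replacing $M$ by $M/H^0_{\mathfrak m}(M)$, which leaves $K^t$ unchanged for $t\geq1$, shows that $\operatorname{depth} K^t(M)\geq\min(2,t)$, and in particular $\mathfrak m\notin\operatorname{Ass}K^t(M)$.

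It remains to globalize. If $x$ were an associated point of $E$ with $\dim\overline{\{x\}}<q$, then $\mathfrak m_x$ would be associated to $E_x=K^t(M)$ with $t\geq1$, contradicting $\operatorname{depth}\geq1$. Hence every associated point has dimension $q$, and $E$ is pure.

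I expect the depth statement $\operatorname{depth}K^t(M)\geq1$ for $t\geq1$ to be the main obstacle. I would simply cite \cite{Schenzel} or the Stacks Project for it rather than reprove it in full.
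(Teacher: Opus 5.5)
Your proposal is correct and follows essentially the same route as the paper. You localize, use local duality (with $\mathcal O_{X,x}$ of dimension $n-\dim\overline{\{x\}}$) for the vanishing in (1) and in the final assertion, cite Schenzel's result that the top module $K^t(M)$ has all associated primes of dimension $t$ to get purity in (2), and use finite-length Matlis duality at the generic points for the cycle equality in (3). Your detour through a Gorenstein cover $G$ is unnecessary, and so is the induction on $t$: taking $y\in\mathfrak m$ regular on $M/H^0_{\mathfrak m}(M)$, the Ext long exact sequence already makes $y$ a nonzerodivisor on $K^t(M)$ for $t\geq 1$.
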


\begin{proof}
The vanishing and purity assertions follow from the local canonical-module properties in \cite[Theorem~1.8 and Lemma~1.9(c)]{Schenzel}.
The same vanishing gives the assertion when $\dim \operatorname{Supp} \mathcal M$ is less than $q$. 
At the generic point $\eta$ of a $q$-dimensional component of $\operatorname{Supp} \mathcal M$, the ring ${\mathcal O}_{X,\eta}$ is Cohen-Macaulay of dimension $n-q$, and $\mathcal M_\eta$ has finite length.
Finite-length local duality~\cite[Tag 0A7Q]{Stacks} therefore gives
\[
 \operatorname{Length}_{{\mathcal O}_{X,\eta}} \bigl({\mathcal{E}\!xt}_X^{n-q}(\mathcal M,{\omega}_X)_\eta\bigr) = \operatorname{Length}_{{\mathcal O}_{X,\eta}}(\mathcal M_\eta).
\]
Since the Ext sheaf is supported on ${\operatorname{Supp}} \mathcal M$,
these equalities give \textup{(3)} by Definition~\ref{def:hilbert-coefficients}.
\end{proof}

\subsection{Evaluation image ideal} 
The evaluation image allows us to make comparisons between successive divisor cuts on the fixed base scheme $B=V(\mathcal J)$. The sheaves supported on $|B|$ will be controlled by Lemma~\ref{lem:length}, while the prescribed dimension conditions in Theorem~\ref{thm:unified-intersection} ensure that the next section is regular on the corresponding $|B|$-torsion-free quotient. Dual Koszul evaluations convert the resulting maps of Ext sheaves into maps of ideals in $\mathcal O_B$, through which annihilation statements pass from one stage to the next.

Throughout this subsection, let $X$ be a projective Cohen-Macaulay scheme of pure dimension $n$ over a field $k$, and let $\mathcal J\subseteq\mathcal O_X$ be a coherent ideal sheaf, with $B=V(\mathcal J)$. Let $L_1,\ldots,L_r$ be line bundles on $X$, with $0\leq r\leq n$, and choose $ s_i\in H^0(X,\mathcal{J}\otimes L_i)$. 
For $0\leq j\leq r$, write $B_j=V(s_1,\ldots,s_j)=V(\underline {s})$ for their common zero scheme, $B_0=X$. Thus $B\subseteq B_j$.
Each section $s_i$ gives a two-term complex $\left[ L_i^{-1}\xrightarrow{s_i}\mathcal{O}_X \right] $ concentrated in degrees $-1$ and $0$.
Their total tensor product is the Koszul complex 
$\mathsf{K}(\underline {s})= \mathsf{K}(s_1,\ldots,s_j) = \operatorname{Tot}\left( \otimes_{i=1}^{j} \left[ L_i^{-1}\xrightarrow{s_i}\mathcal{O}_X \right]\right)$, which takes the explicit form
\[
0 \to \bigotimes_{i=1}^{j}  L_i^{-1} \to \bigoplus_{\ell=1}^j \bigotimes_{\substack{i\ne\ell\\1\le i\le j}}  L_i^{-1} \to 
\cdots \to \bigoplus_{i=1}^j L_i^{-1} \xrightarrow{(s_1,\dots,s_j)} \mathcal O_X \to 0
\]
The complex is  concentrated in degrees $-j,\ldots,0$ and self dual under \[
R\mathcal Hom_{X}(\_,\mathcal O_X)\otimes_{i=1}^{j}L_i^{-1}[j],
\]see~\cite[Tag 062J]{Stacks}. 
When the sections form a regular sequence, this complex resolves $\mathcal{O}_{B_j}$. In general, one always has ${H}^{0}\bigl(\mathsf{K}(s_1,\ldots,s_j)\bigr) = \mathcal{O}_{B_j}$.

Let $\omega_X\twoheadrightarrow\mathcal F$ be a coherent quotient annihilated by $s_1,\ldots,s_j$, with $\dim\operatorname{Supp}\mathcal F\leq n-j$. 
Tensoring the Koszul augmentation map $\mathsf{K}(\underline {s}) \rightarrow \mathcal O_{B_j}$ with $\omega_X$ and composing with the induced quotient gives a chain map $\mathsf K(\underline {s})\otimes\omega_X \rightarrow\omega_X\otimes\mathcal O_{B_j} \twoheadrightarrow\mathcal F$. 
We call this the augmentation of the quotient $\omega_X\twoheadrightarrow\mathcal F$. Because the Koszul complex is locally free and self dual, the derived tensor-Hom adjunction gives 
$$R\mathcal Hom_X  \bigl(\mathsf K(\underline {s})\otimes\omega_X,\omega_X\bigr)\otimes_{i=1}^jL_i^{-1}[j]\cong  R\mathcal Hom_X  \bigl(\mathsf K(\underline {s}),\otimes_{i=1}^jL_i^{-1}[j]\bigr)\cong \mathsf K(\underline {s}).$$ 
Dualizing the augmentation by $R\mathcal Hom_{X}(\_,\omega_X\otimes_{i=1}^{j}L_i^{-1}[j])$, and taking cohomology in degree $0$ therefore gives a natural evaluation map 
\[ \mathrm{ev}_j\colon  \mathcal{E}\!xt_X^j(\mathcal F,\omega_X\otimes_{i=1}^jL_i^{-1}) \rightarrow\mathcal O_{B_j}.\] 
Its image is a coherent ideal sheaf of $\mathcal O_{B_j}$. Taking its image under $\mathcal O_{B_j}\twoheadrightarrow\mathcal O_B$ leads to the following definition.

\begin{definition}\label{def:image}
The $j$-th evaluation image ideal of the quotient $\omega_X\twoheadrightarrow\mathcal F$ in
$\mathcal{O}_B$ is
\[
 \mathfrak{q}_j(\mathcal F) := \operatorname{im}\left( \mathcal{E}\!xt_X^j(\mathcal F,\omega_X) \otimes_{i=1}^{j}L_i^{-1}
 \xrightarrow{\mathrm{ev}_j}  \mathcal{O}_{B_j} \twoheadrightarrow \mathcal{O}_B \right).
\]
For $j=0$, the empty Koszul complex and the empty tensor product are both $\mathcal{O}_X$. In particular, $ \mathfrak{q}_0(\omega_X)=\mathcal{O}_B$. 
\end{definition}

The definition does not require the sections $s_1,\ldots,s_j$ to form a regular sequence.

\begin{remark}
Locally, assume in addition that $X$ is Gorenstein, the evaluation ideal $\mathfrak q_j(\omega_X|_Z)$ for closed subscheme $Z\subseteq B_j$ is the image in $\mathcal O_B$ of the Koszul-Fitting ideal $\operatorname{Kitt}_R(\underline {s},I_Z)$ after trivializing the line bundles, see \cite{BH}.
\end{remark}

We now collect the basic properties of these evaluation image ideals.
\begin{lemma}\label{lem:basic-images}
Under the preceding hypotheses, let $\omega_X\twoheadrightarrow\mathcal F$ be a quotient with
\[
\dim \operatorname{Supp} \mathcal F\leq n-j.
\]
\begin{enumerate}[label=\textup{(\arabic*)}]
\item
The evaluation image ideal $\mathfrak q_j(\mathcal F)$ is coherent and supported on $\operatorname{Supp}(\mathcal F)\cap|B|$. Its construction commutes
with restriction to open subsets of $X$.

\item For a further quotient $\omega_X\to \mathcal F\twoheadrightarrow\mathcal F'$, one has $\mathfrak q_j(\mathcal F')\subseteq\mathfrak q_j(\mathcal F)$.
\end{enumerate}
\end{lemma}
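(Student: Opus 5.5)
Both assertions follow from how $\mathrm{ev}_j$ was built: it is obtained from the augmentation chain map $\mathsf K(\underline s)\otimes\omega_X\to\mathcal F$ by applying the contravariant functor $R\mathcal Hom_X(\_,\omega_X\otimes_{i=1}^jL_i^{-1}[j])$ and taking $\mathcal H^0$. So the plan is to check coherence and localization through this construction, then use naturality in $\mathcal F$ for the inclusion.

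\emph{Coherence, support and localization.} All sheaves involved are coherent: $\mathcal F$ and $\omega_X$ by hypothesis, and $\mathsf K(\underline s)$ because it is a bounded complex of locally free sheaves of finite rank. Hence $\mathcal{E}\!xt_X^j(\mathcal F,\omega_X)\otimes_{i=1}^jL_i^{-1}$ is coherent, being a local Ext of coherent sheaves on a noetherian scheme. Its image under the morphism of coherent sheaves $\mathcal O_X$-modules $\mathrm{ev}_j$, followed by $\mathcal O_{B_j}\twoheadrightarrow\mathcal O_B$, is a coherent subsheaf of $\mathcal O_B$. The support of this image lies in the support of the source. Since $\mathcal{E}\!xt$ sheaves are computed stalkwise, the source is supported on $\operatorname{Supp}\mathcal F$. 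The target is $\mathcal O_B$, supported on $|B|$, so the image is supported on $\operatorname{Supp}\mathcal F\cap|B|$.

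For compatibility with an open $U\subseteq X$, note that every ingredient commutes with restriction to $U$:
\begin{itemize}
\item the Koszul complex satisfies $\mathsf K(\underline s)|_U=\mathsf K(\underline s|_U)$;
\item the augmentation restricts to the augmentation of $\omega_U\twoheadrightarrow\mathcal F|_U$, using $\omega_X|_U=\omega_U$;
\item $R\mathcal Hom$ commutes with restriction to opens, and so does the self-duality isomorphism of the Koszul complex.
\end{itemize}
Hence $\mathrm{ev}_j$ restricts to the evaluation map on $U$. Since taking images commutes with restriction, so does $\mathfrak q_j$.

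\emph{Inclusion for a further quotient.} Let $\pi\colon\mathcal F\twoheadrightarrow\mathcal F'$. Then $\mathcal F'$ is annihilated by $s_1,\dots,s_j$ and has $\dim\operatorname{Supp}\mathcal F'\le n-j$, so its evaluation map is defined. The augmentation of $\mathcal F'$ equals $\pi$ composed with the augmentation of $\mathcal F$, because both factor through $\omega_X\otimes\mathcal O_{B_j}$. By contravariance of the dual, the induced map on $\mathcal{E}\!xt^j$ is $\pi^*\colon\mathcal{E}\!xt_X^j(\mathcal F',\omega_X)\otimes_{i=1}^jL_i^{-1}\to\mathcal{E}\!xt_X^j(\mathcal F,\omega_X)\otimes_{i=1}^jL_i^{-1}$. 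Functoriality then gives
\[
\mathrm{ev}_j^{\mathcal F'}=\mathrm{ev}_j^{\mathcal F}\circ\pi^*.
\]
Therefore $\operatorname{im}\mathrm{ev}_j^{\mathcal F'}\subseteq\operatorname{im}\mathrm{ev}_j^{\mathcal F}$. Composing with $\mathcal O_{B_j}\twoheadrightarrow\mathcal O_B$ gives $\mathfrak q_j(\mathcal F')\subseteq\mathfrak q_j(\mathcal F)$.

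\emph{Main point to check.} The one step needing care is that the identification of the dual Koszul complex with $\mathsf K(\underline s)$ is fixed once and for all, independently of $\mathcal F$. Then the degree-$0$ cohomology target really is the same $\mathcal O_{B_j}$ for both $\mathcal F$ and $\mathcal F'$, and the factorization $\mathrm{ev}_j^{\mathcal F'}=\mathrm{ev}_j^{\mathcal F}\circ\pi^*$ is an equality of maps into that common target, not merely an equality up to some isomorphism.
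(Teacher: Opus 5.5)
Your proposal is correct and follows essentially the same route as the paper: coherence because $\mathfrak q_j(\mathcal F)$ is the image of a morphism of coherent sheaves, compatibility of each ingredient with restriction to opens, and contravariance of the derived dual applied to the factorization of the augmentation through $\mathcal F\twoheadrightarrow\mathcal F'$. The only cosmetic difference is in the support bound. You use stalkwise vanishing of $\mathcal{E}\!xt_X^j(\mathcal F,\omega_X)$ off $\operatorname{Supp}\mathcal F$, whereas the paper notes that the annihilator of $\mathcal F$ kills the Ext sheaf and hence its image; the two arguments are equivalent.
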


\begin{proof}
Coherence follows since it is defined as the image of a morphism of coherent sheaves. The annihilator of $\mathcal F$ annihilates $\mathcal{E}\!xt_X^j(\mathcal F,\omega_X)$, and therefore annihilates its evaluation image as well. 
Since this image lies in $\mathcal O_B$, its support is contained in $\operatorname{Supp}\mathcal F\cap|B|$.
The Koszul construction, derived duality, and the operation of taking images all commute with restriction to open subsets. This proves \textup{(1)}.

For \textup{(2)}, the augmentation to $\mathcal F'$ factors as $\mathsf K(\underline {s})\otimes \omega_X\rightarrow\mathcal F\twoheadrightarrow\mathcal F'$. 
Contravariance of derived Hom therefore gives a morphism from the source of the evaluation for $\mathcal F'$ to that for $\mathcal F$, compatible with their evaluation maps into $\mathcal O_{B_j}$.
Composing with $\mathcal O_{B_j}\twoheadrightarrow\mathcal O_B$ and taking images yields the desired inclusion.
\end{proof}

We next compare evaluation image ideals before and after restriction to the zero scheme of the next section. When this section is regular  on the quotient sheaf being restricted, the evaluation images satisfy an inclusion in the fixed sheaf $\mathcal O_B$. This inclusion will allow us to pass from one stage of the induction to the next.

\begin{lemma}
\label{lem:cut}
Let $0\leq j<r$, and let $\omega_X\twoheadrightarrow\mathcal F$ be a quotient as above. Suppose that $s_{j+1}$ is regular on $\mathcal F$, and  $H=V(s_{j+1})$.
Regard $\mathcal F|_H$ as a sheaf on $X$, with its induced quotient map from $\omega_X$.
Then $s_1,\ldots,s_{j+1}$ annihilate $\mathcal F|_H$, $\dim\operatorname{Supp}(\mathcal F|_H)\leq n-j-1$, and
\begin{equation}\label{eq:cut-image}  \mathfrak q_j(\mathcal F)  \subseteq\mathfrak q_{j+1}(\mathcal F|_H).
\end{equation}
\end{lemma}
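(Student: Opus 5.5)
The plan is to produce an explicit morphism
\[
\delta\colon \mathcal{E}\!xt_X^j(\mathcal F,\omega_X)\otimes\textstyle\bigotimes_{i=1}^{j}L_i^{-1}\longrightarrow \mathcal{E}\!xt_X^{j+1}(\mathcal F|_H,\omega_X)\otimes\textstyle\bigotimes_{i=1}^{j+1}L_i^{-1}
\]
such that $\mathrm{ev}_{j+1}\circ\delta=\pm\,\pi\circ\mathrm{ev}_j$, where $\pi\colon\mathcal O_{B_j}\twoheadrightarrow\mathcal O_{B_{j+1}}$ is the natural surjection. Composing with $\mathcal O_{B_{j+1}}\twoheadrightarrow\mathcal O_B$ then gives \eqref{eq:cut-image}, because the image of a composite lies in the image of its last map. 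The map $\delta$ will be the connecting homomorphism of the short exact sequence coming from regularity of $s_{j+1}$.

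\emph{Elementary assertions.} Write $L=L_{j+1}$ and $s=s_{j+1}$. Regularity of $s$ on $\mathcal F$ gives the exact sequence
\[
0\to\mathcal F\otimes L^{-1}\xrightarrow{s}\mathcal F\to\mathcal F|_H\to0 .
\]
The sheaf $\mathcal F|_H$ is a quotient of $\mathcal F$, so $s_1,\dots,s_j$ kill it. It is also a module over $\mathcal O_H$, so $s_{j+1}$ kills it. The dimension bound $\dim\operatorname{Supp}(\mathcal F|_H)\le n-j-1$ is Proposition~\ref{prop:cycles-ic}(4), applied with $q=n-j$; when $\dim\operatorname{Supp}\mathcal F<n-j$ the bound follows in the same way. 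Hence $\mathfrak q_{j+1}(\mathcal F|_H)$ is defined. Applying $\mathcal Hom(-,\omega_X)$ to the sequence, the long exact Ext sequence yields a connecting map
\[
\mathcal{E}\!xt^j(\mathcal F\otimes L^{-1},\omega_X)=\mathcal{E}\!xt^j(\mathcal F,\omega_X)\otimes L\to\mathcal{E}\!xt^{j+1}(\mathcal F|_H,\omega_X).
\]
Twisting by $\bigotimes_{i\le j+1}L_i^{-1}$ gives $\delta$.

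\emph{Compatibility.} I would realize $\mathsf K_{j+1}=\mathsf K(s_1,\dots,s_{j+1})$ as the mapping cone of $s\colon\mathsf K_j\otimes L^{-1}\to\mathsf K_j$. This gives a distinguished triangle
\[
\mathsf K_j\otimes L^{-1}\otimes\omega_X\to\mathsf K_j\otimes\omega_X\to\mathsf K_{j+1}\otimes\omega_X\to .
\]
The aim is to map it to the triangle of the short exact sequence above, via the augmentations, by an honest map of complexes. The augmentation of $\mathsf K_j$ kills the degree $-1$ terms. Because $\mathcal F|_H$ is quasi-isomorphic to the cone of $s$ on $\mathcal F$, the augmentation $\mathsf K_{j+1}\otimes\omega_X\to\mathcal F|_H$ is the induced map on cones. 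I would check this degreewise from the explicit form of the Koszul differential; the squares commute because $s$ acts compatibly on both rows.

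Then I would apply $R\mathcal Hom(-,\omega_X\otimes\bigotimes_{i\le j+1}L_i^{-1}[j+1])$, which produces a morphism of dual triangles. Through the self-duality isomorphisms, the Koszul side of the dual triangle becomes the cone triangle
\[
\mathsf K_j\to\mathsf K_{j+1}\to\mathsf K_j\otimes L^{-1}[1],
\]
and its relevant map in $H^0$ is $\pi\colon\mathcal O_{B_j}\to\mathcal O_{B_{j+1}}$. The sheaf side, after taking $H^0$, is $\delta$, using the vanishing in Lemma~\ref{lem:leading}(1) to identify the cohomology sheaves with the leading Ext sheaves. Commutativity of the resulting square is exactly the identity $\mathrm{ev}_{j+1}\circ\delta=\pm\pi\circ\mathrm{ev}_j$.

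\emph{Main obstacle.} The step I expect to be hardest is verifying that the self-duality isomorphisms of $\mathsf K_j$ and $\mathsf K_{j+1}$ are compatible with the cone triangle. That is, dualizing the triangle for $\mathsf K_{j+1}$ must return the triangle for $\mathsf K_j$, up to the twist by $L$, the shift, and a sign. I would check this directly on the explicit exterior-algebra model, where duality pairs $\bigwedge^p$ with $\bigwedge^{j+1-p}$. A sign discrepancy is harmless, since images of ideals are unchanged by $\pm1$.
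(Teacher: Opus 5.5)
Your proposal is correct and is essentially the paper's own argument. The paper likewise writes $\mathsf K(\underline s,s_{j+1})$ as the cone of $s_{j+1}$ on $\mathsf K(\underline s)$, maps that triangle to $\mathcal F\otimes L_{j+1}^{-1}\to\mathcal F\to\mathcal F|_H$ via the augmentations, dualizes, and uses naturality of the connecting map together with Koszul self-duality to obtain $\mathrm{ev}_{j+1}\circ\delta=\pi\circ\mathrm{ev}_j$; the paper phrases this as a square through $\mathcal{E}\!xt^j_X(\mathcal F,\ldots)/s_{j+1}\hookrightarrow\mathcal{E}\!xt^{j+1}_X(\mathcal F|_H,\ldots)$, whose injectivity is not needed for the inclusion. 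The self-duality compatibility you flag as the main obstacle is exactly the point the paper asserts without detail, and your exterior-algebra check is the right way to supply it: up to sign, the dual of the projection onto the $e_{j+1}$-component should be the inclusion $\mathsf K(\underline s)\hookrightarrow\mathsf K(\underline s,s_{j+1})$.
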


\begin{proof} Since the evaluation maps are obtained by applying derived duality to the Koszul augmentations, comparing their images requires us to understand how these augmentations interact with multiplication by $s_{j+1}$ and the resulting cut. Write $\underline s=(s_1,\ldots,s_j)$, and consider the
augmentation $\mathrm{aug}\colon \mathsf K(\underline s)\otimes \omega_X \rightarrow \mathcal O_{B_j}\otimes\omega_X \twoheadrightarrow \mathcal F$. On the Koszul side, adjoining $s_{j+1}$ amounts to tensoring with the two-term complex $\mathsf K(s_{j+1})=[L_{j+1}^{-1}\xrightarrow{s_{j+1}}\mathcal O_X]$, or equivalently, taking the mapping cone of multiplication by $s_{j+1}$:
\[
\mathsf K(\underline s,s_{j+1}) = \operatorname{Tot}\bigl( \mathsf K(\underline s)\otimes\mathsf K(s_{j+1})\bigr)
\cong \operatorname{Cone}\bigl(\mathsf K(\underline s)\otimes L_{j+1}^{-1}\xrightarrow{s_{j+1}} \mathsf K(\underline s)\bigr).
\]
On $\mathcal F$, the same construction computes restriction to $H$:
regularity of $s_{j+1}$ on $\mathcal F$ gives the exact
sequence $0\to \mathcal F\otimes L_{j+1}^{-1} \xrightarrow{s_{j+1}} \mathcal F \rightarrow \mathcal F|_H\to 0$ and also implies $\dim \mathcal F|_H\leq n-j-1$. The augmentation and quotient maps commute with multiplication by $s_{j+1}$ at the level of complexes. The induced maps of mapping cones therefore fit into the following commutative diagram in the derived category:
\[
\begin{tikzcd}[row sep={0.3cm}]
 {\mathsf K(\underline s)\otimes\omega_X\otimes L_{j+1}^{-1}}  \arrow[r,"s_{j+1}"]\arrow[d] & {\mathsf K(\underline s)\otimes\omega_X} \arrow[r]\arrow[d] & {\mathsf K(\underline s,s_{j+1})\otimes\omega_X} \arrow[d]\arrow[rd,"q_{j+1}"] & \\
 {\omega_X\otimes\mathcal O_{B_j}\otimes L_{j+1}^{-1}} \arrow[r,"s_{j+1}"]\arrow[d] & {\omega_X\otimes\mathcal O_{B_j}} \arrow[r]\arrow[d] & {\operatorname{Cone}(s_{j+1})}
 \arrow[d]\arrow[r,"q"]  & {\omega_X\otimes\mathcal O_{B_{j+1}}}\arrow[ld]\\ 
 {\mathcal F\otimes L_{j+1}^{-1}}\arrow[r,"s_{j+1}"]& {\mathcal F}\arrow[r] &
 {\mathcal F|_H} &
\end{tikzcd}
\]
Here the third column consists of morphisms in the derived category induced by the mapping cone construction, $q_{j+1}$ is the quotient map from the Koszul complex, and $q$ is the quotient map given by the cone construction. Thus the augmentation maps induce a commutative diagram of distinguished triangles:
\[
\begin{tikzcd}[row sep={0.5cm}]
	{\mathsf{K}(\underline{s})\otimes\omega_X\otimes L^{-1}_{j+1}} \arrow[r, "{s_{j+1}}"] \arrow[d, "{\mathrm {aug}\otimes \mathrm {id}}"] & {\mathsf{K}(\underline{s})} \otimes\omega_X \arrow[r] \arrow[d, "{\mathrm {aug}}"] & {{\mathsf{K}(\underline{s},s_{j+1})\otimes \omega_X}} \arrow[d, "{\mathrm {aug}}"] \arrow[r, "+1"] & {} \\
		{\mathcal F\otimes L^{-1}_{j+1}} \arrow[r, "{s_{j+1}}"]                                & {\mathcal F} \arrow[r]                          & {\mathcal F|_{H}} \arrow[r, "+1"]                                             & {}
	\end{tikzcd}.
	\]
To obtain the evaluation image ideals, we apply the derived dual
\[
R\mathcal Hom_X(-,\omega_X\otimes_{i=1}^{j+1}L_i^{-1}).
\]
The lower triangle together with the vanishing Lemma~\ref{lem:leading} (1)  gives the following long exact Ext sequence 
\[
\begin{aligned}
0 \longrightarrow{}&\mathcal{E}\!xt_X^{j}  (\mathcal F,\omega_X\otimes_{i=1}^{j}L_i^{-1})  \otimes L_{j+1}^{-1}\xrightarrow{\;s_{j+1}\;}\mathcal{E}\!xt_X^{j}  (\mathcal F,\omega_X\otimes_{i=1}^{j}L_i^{-1})\\[0.8ex]
\longrightarrow{}&\mathcal{E}\!xt_X^{j+1}  (\mathcal F|_H,\omega_X\otimes_{i=1}^{j+1}L_i^{-1})\longrightarrow\mathcal{E}\!xt_X^{j+1}  (\mathcal F,\omega_X\otimes_{i=1}^{j}L_i^{-1})  \otimes L_{j+1}^{-1}\longrightarrow \cdots. 
\end{aligned}
\]

\noindent Naturality of the connecting morphisms in the dualized diagram of triangles, together with Koszul self-duality, gives the following commutative diagram
\[
	\begin{tikzcd}[row sep=small]
		\dfrac{\mathcal{E}\!xt_X^j(\mathcal F,\omega_X\otimes_{i=1}^{j}L_i^{-1})}
		{s_{j+1}\mathcal{E}\!xt_X^j(\mathcal F,\omega_X\otimes_{i=1}^{j+1}L_i^{-1})}
		\arrow[r,hook]\arrow[d,"{\mathrm {ev}_j/(s_{j+1})}"]
		&
		\mathcal{E}\!xt_X^{j+1}(\mathcal F|_{H}, \omega_X\otimes_{i=1}^{j+1}L_i^{-1})
		\arrow[d,"{\mathrm {ev}_{j+1}}"]\\
		\mathcal O_{B_{j+1}}\arrow[r,equal]&\mathcal O_{B_{j+1}}.
	\end{tikzcd}
\]
Composing with the quotient map $\mathcal O_{B_{j+1}} \to \mathcal O_B$ and taking images, and and using that $s_{j+1}$ vanishes on $\mathcal O_B$, we conclude that
$
\mathfrak q_j(\mathcal F)\subseteq \mathfrak q_{j+1}(\mathcal F|_{H}).
$\end{proof}

\vspace{20pt}

\section{Proof of Theorem~\ref{thm:unified-intersection}}
\label{sec:all-dimensions}

We prove Theorem~\ref{thm:unified-intersection} by successively removing $|B|$-torsion and taking regular cuts of quotients of $\omega_X$.

\begin{proof}[Proof of Theorem~\ref{thm:unified-intersection}]
Let $U=X\setminus|B|$. Starting with $\mathcal F_0=\omega_X$, we construct a sequence of coherent quotient sheaves $\omega_X \twoheadrightarrow \mathcal F_1 \twoheadrightarrow\cdots \twoheadrightarrow \mathcal F_r$
by removing the subsheaves of sections supported on $|B|$ and then restricting to the zero scheme of the next prescribed section.

At the $j$-th stage with $0\leq j\leq r$,  we remove the $|B|$-torsion $\Gamma_{|B|}\mathcal F_j$ and define
\[
\mathcal F_j^{B\mathrm{tf}}=\mathcal F_j/\Gamma_{|B|}\mathcal F_j.
\]
Here the superscript ${B\mathrm{tf}}$ denotes $|B|$-torsion-free quotient, obtained by quotienting $\mathcal F_j$ by its $\mathcal J$-torsion subsheaf $\Gamma_{|B|}\mathcal F_j$. The subsheaf $\Gamma_{|B|}\mathcal F_j$ is coherent and supported on $B$, and $ \operatorname{Ass}\bigl(\mathcal F_j^{B\mathrm{tf}}\bigr)  \cap|B|=\varnothing$.  Suppose inductively that $\operatorname{Supp}\mathcal F_j\subseteq B_j$, $\mathcal F_j|_U=\omega_X\otimes \mathcal O_{B_j}|_{U}$, and $\dim \operatorname{Supp} \mathcal F_j\leq n-j$. 

For $j<r$, we verify that the next section $s_{j+1}$ is regular on $\mathcal F_j^{B\mathrm{tf}}$. Inductively, each  $B_j|_U$ is Cohen-Macaulay and pure of dimension $n-j$. Indeed, on $U$, the restriction of $s_{j+1}$ is regular on $\mathcal F_j^{B\mathrm{tf}}|_U$.  If $s_{j+1}$ vanished at an associated point of $B_j|_U$, then $B_{j+1}|_U$ would contain a component of dimension $n-j$, contrary to the prescribed dimension bound. Thus $s_{j+1}$ is regular on $B_j|_U$, and its  zero scheme is Cohen-Macaulay and pure of dimension $n-j-1$. The quotient map $\mathcal F_j\to\mathcal F_j^{B\mathrm{tf}}$ restricts to an isomorphism on $U$. 
All associated points of $\mathcal F_j^{B\mathrm{tf}}$ lie in $U$, where this sheaf agrees with $\omega_X\otimes\mathcal O_{B_j}|_U$. Hence $s_{j+1}$ is regular on $\mathcal F_j^{B\mathrm{tf}}$.

For $0\leq j<r$, we define $$\mathcal F_{j+1} = \mathcal F_j^{{B \mathrm {tf}}}/  s_{j+1}\mathcal F_j^{{B \mathrm {tf}}}\otimes L_{j+1}^{-1}.$$
Then $\operatorname{Supp}\mathcal F_{j+1}\subseteq B_{j+1}$, $\mathcal F_{j+1}|_U=\omega_X\otimes \mathcal O_{B_{j+1}}|_U$ and $\dim \operatorname{Supp} \mathcal F_{j+1}\leq n-j-1$. If $r<n$, condition~\textup{(2)} of Theorem~\ref{thm:unified-intersection} also gives $\operatorname{Supp}\mathcal F_r\subseteq|B|$.

Consider now the exact sequence $0\to \Gamma_{|B|}\mathcal F_j\to\mathcal F_j\xrightarrow{ }  \mathcal{F}_j^{B\mathrm {tf}}\to 0$.  
Taking local duality $\mathcal E(\cdot):=\mathcal{E}\!xt^j_X(\cdot,\omega_X\otimes_{i=1}^{j}L_i^{-1})$, by Lemma~\ref{lem:leading}\textup{(1)}, we have
{\small
\[\begin{tikzcd}
	0 \arrow[r] & \mathcal E {\mathcal F_j^{{B \mathrm {tf}}}} \arrow[r] &  \mathcal {E}\mathcal{F}_j \arrow[rr] \arrow[rd, two heads] &                                    & {\mathcal E{\Gamma_{|B|}\mathcal F_j}} \arrow[r] & {\cdots} \\
	&                                                                  &                                                                         & {{\mathcal Q_j}} \arrow[ru, hook] &                                                            &         
\end{tikzcd}.\]}

\noindent Here we adopt the shorthand notation ${\mathcal Q_j}$ to denote the cokernel of the dual inclusion map $\mathcal E  \mathcal F_j^{{B \mathrm {tf}}} \hookrightarrow \mathcal E \mathcal F_j$.  By Lemma~\ref{lem:leading} \textup{(2)} the leading dual modules $\mathcal E{\mathcal F_j^{{B \mathrm {tf}}}}$, $\mathcal{E}\mathcal{F}_j$ and ${\mathcal E{\Gamma_{|B|}\mathcal F_j}}$ are zero or of pure dimension $q=n-j$. The sheaf  ${\mathcal E{\Gamma_{|B|}\mathcal F_j}}$ is supported on $|B|$, thus the cokernel ${{\mathcal Q_j}}$ is also zero or of pure dimension, and is supported on $|B|$. Applying Lemma~\ref{lem:length} to ${\mathcal Q_j}$,
with the ample line bundles
$L_{j+1},\ldots,L_r$ and $n-r$ copies of $A$, yields
\begin{equation} \label{eq:cokervanish}
     (\sqrt{\mathcal{J}})^{\deg\left(\prod_{i=j+1}^{r}c_1(L_i)
 c_1(A)^{n-r}\cap [{\mathcal Q_j}]_q\right)}
 {\mathcal Q_j}=0.
\end{equation}
Cycle preservation under duality (cf. Lemma~\ref{lem:leading} \textup{(3)}) and invariance under invertible twists (Proposition~\ref{prop:cycles-ic} \textup{(1), (3)}),
together with additivity, give  $[{\mathcal Q_j}]_{n-j} =  [\mathcal F_j]_{n-j}-[\mathcal F_j^{{B \mathrm {tf}}}]_{n-j}$.
Define
\[
 e_j =\deg\left( \left(\prod_{i=j+1}^{r}c_1(L_i)\right) c_1(A)^{ n-r}\cap[\mathcal F_j]_{n-j} \right),\ 0\leq j\leq r.
\]
These are nonnegative integers by
Proposition~\ref{prop:cycles-ic} \textup{(2)}, and $e_0=d$.
For $j<r$, Proposition~\ref{prop:cycles-ic} \textup{(4)} gives 
$ [\mathcal F_{j+1}]_{n-j-1}
 = c_1(L_{j+1})\cap[\mathcal F_j^{{B \mathrm {tf}}}]_{n-j}$ in $\operatorname{CH}_{n-j-1}(X)$. Thus
 \begin{align*}
     e_{j+1}\!=&\deg\left( (\prod_{i=j+2}^{r}c_1(L_i)) c_1(A)^{ n-r}\cap[\mathcal F_{j+1}]_{n-j-1} \right)\\
 =& \deg\left( (\prod_{i=j+1}^{r}c_1(L_i)) c_1(A)^{n-r}\cap[\mathcal F_j^{{B \mathrm {tf}}}]_{n-j} \right).
\end{align*}
Consequently,
\[
 \deg\left(  \left(\prod_{i=j+1}^{r}c_1(L_i)\right) c_1(A)^{n-r}\cap[{\mathcal Q_j}]_{n-j} \right)= e_j-e_{j+1}.
\]
So equation (\ref{eq:cokervanish}) yields $(\sqrt{\mathcal J})^{e_j-e_{j+1}}{\mathcal Q_j}=0$. Since this power annihilates the cokernel $\mathcal Q_j$, so $(\sqrt{\mathcal J})^{e_j-e_{j+1}}(\mathcal{E} \mathcal{F}_j)\subseteq (\mathcal{E}\mathcal {F}_j^{{B \mathrm {tf}}})$. Passing to the evaluation images (Definition~\ref{def:image}), we have
 $(\sqrt{\mathcal{J}})^{e_j-e_{j+1}} \mathfrak{q}_j(\mathcal F_j)
 \subseteq \mathfrak{q}_j(\mathcal F_j^{{B \mathrm {tf}}})$. 
 Multiplying by $(\sqrt{\mathcal{J}})^{e_{j+1}}$, we obtain
 \[
 (\sqrt{\mathcal{J}})^{e_j}\mathfrak{q}_j(\mathcal F_j)
 \subseteq
 (\sqrt{\mathcal{J}})^{e_{j+1}} \mathfrak{q}_j(\mathcal F_j^{{B \mathrm {tf}}}).
 \]
 Combining the equation~\eqref{eq:cut-image}, we obtain the reduction
\[
 (\sqrt{\mathcal{J}})^{e_j}\mathfrak{q}_j(\mathcal F_j)
 \subseteq
 (\sqrt{\mathcal{J}})^{e_{j+1}}
 \mathfrak{q}_{j+1}(\mathcal F_{j+1}), \ 0\leq j<r.
\]
Iteration, including $r=0$, yields
\begin{equation}\label{eq:unified-iteration}
 (\sqrt{\mathcal J})^{e_0}\mathcal O_B  \subseteq(\sqrt{\mathcal J})^{e_r}\mathfrak q_r(\mathcal F_r).
\end{equation}
If $r<n$, the sheaf $\mathcal E(\mathcal F_r)$ is pure or zero, is supported on $|B|$, and has weighted degree $e_r$ with respect to $n-r$ copies of $A$.  By Lemma~\ref{lem:length}, the right-hand side of \eqref{eq:unified-iteration} equals zero.  
If $r=n$, the sheaf $\mathcal E(\mathcal F_n)$ has support of dimension at most zero, and $\deg [\mathcal E(\mathcal F_n)]_0=e_n$. The zero-dimensional assertion of Lemma~\ref{lem:length} therefore gives $(\sqrt{\mathcal J})^{e_n}\mathcal E(\mathcal F_n) \subseteq \mathcal J\mathcal E(\mathcal F_n)$, whose image under evaluation in $\mathcal O_B$ is zero. The right-hand side of \eqref{eq:unified-iteration} also vanishes.  
Finally, $e_0=d$, and $\mathcal O_B=\mathcal O_X/\mathcal J$, so $\sqrt{\mathcal J}^d\mathcal O_B=0$ is exactly $\sqrt{\mathcal J}^d\subseteq \mathcal J$.

To prove the prime-power product assertion, we retain the generic multiplicities in the comparison with the $|B|$-torsion-free quotient. Fix a stage $0\leq j\leq r$.
The cokernel $\mathcal Q_j$ at this stage is zero or pure of dimension $n-j$. Decompose its effective cycle as 
\begin{equation}\label{eq:ajcycledec}[\mathcal F_j]_{n-j}-[\mathcal F_j^{{B \mathrm {tf}}}]_{n-j}= [\mathcal Q_j]_{n-j}=\sum_{\alpha\in \mathfrak A_j}m_\alpha[W_\alpha],\end{equation}
where the $W_\alpha$ are the irreducible components of $\operatorname{Supp}\mathcal Q_j$ carries their reduced induced scheme structures and $m_\alpha>0$ is the generic length of the $\mathcal Q_j$ along $W_\alpha$. We index by $\mathfrak A=\cup_j \mathfrak A_j$ for the components of $\alpha$'s occurring over all stages. Moreover, one record the stage by the codimensions. That is $\mathfrak A_j=\{\alpha\mid \mathrm {codim}_X W_\alpha=j\}$. Each $W_\alpha$ lies in $|B|$. Set $\mathfrak a_j= \prod_{\alpha\in \mathfrak A_j}\mathcal I_{W_\alpha}^{m_\alpha}$, with an empty product understood to be $\mathcal O_X$.
The finite-length argument in the proof of Lemma~\ref{lem:length} shows that $\mathfrak a_j$ annihilates the cokernel $\mathcal Q_j$ at every generic point of its support. Purity therefore gives annihilation $\mathfrak a_j \mathcal Q_j=0$ everywhere.
Passing to evaluation images, we obtain
$\mathfrak a_j\mathfrak q_j(\mathcal F_j)
\subseteq
\mathfrak q_j(\mathcal F_j^{{B \mathrm {tf}}})$, $0\leq j\leq r$. 
Consequently, for $j<r$, Lemma~\ref{lem:cut} gives
\[    \mathfrak a_j\mathfrak q_j(\mathcal F_j)\subseteq\mathfrak q_j(\mathcal F_j^{{B \mathrm {tf}}})\subseteq \mathfrak q_{j+1}(\mathcal F_{j+1}).\]
At the final stage, if $r<n$, $\mathcal F_r^{{B \mathrm {tf}}}=0$; if $r=n$, it has dimension at most zero and is disjoint from $|B|$.
Thus Lemma~\ref{lem:basic-images} \textup{(1)} gives $\mathfrak q_r(\mathcal F_r^{{B \mathrm {tf}}})=0$, $\mathfrak a_r\mathfrak q_r(\mathcal F_r)=0$. Iteration from $\mathfrak q_0(\omega_X)=\mathcal O_B$ yields $\left(\prod_{j=0}^{r}\mathfrak a_j\right)\mathcal O_B=0$. Since $\mathcal O_B=\mathcal O_X/\mathcal J$, this is equivalent to $\prod_\alpha\mathcal I_{W_\alpha}^{m_\alpha} =\prod_{j=0}^{r}\mathfrak a_j \subseteq\mathcal J$.

For each $\alpha$, let $w_\alpha =
\deg \left( \left(\prod_{i=\mathrm {codim}_X W_\alpha+1}^{r}c_1(L_i)\right) c_1(A)^{n-r}\cap[W_\alpha] \right)$. The cycle decompositions (\ref{eq:ajcycledec}) gives $\sum_{\alpha\in \mathfrak A_j}m_\alpha w_\alpha = e_j-e_{j+1}$, $0\leq j<r$. At the final stage, 
\[
\sum_{\alpha\in \mathfrak A_r}m_\alpha w_\alpha 
=\deg \left( c_1(A)^{n-r}\cap \bigl([\mathcal F_r]_{n-r}-[\mathcal F_r^{{B \mathrm {tf}}}]_{n-r}\bigr)\right)\leq e_r,
\]
where the inequality follows from the nonnegativity of the degree contributed by $[\mathcal F_r^{{B \mathrm {tf}}}]_{n-r}$.
Summing over all stages, we conclude that 
\[
\sum_\alpha m_\alpha w_\alpha \leq \sum_{j=0}^{r-1}(e_j-e_{j+1})+e_r = e_0=d.
\]

Finally, every $W_\alpha$ lies in $|B|$, while the product containment gives the reverse inclusion after taking zero loci. Hence $\cup_\alpha|W_\alpha|=|B|$, which completes the proof.
\end{proof}

\vspace{20pt}

\section{Corollaries of Theorem~\ref{thm:unified-intersection}}
\label{sec:refined-degrees}

We now give some applications of Theorem~\ref{thm:unified-intersection} by choosing suitable line bundles and sections. This yields radical-power bounds for fewer generators and generators of unequal degrees.
When the choice of auxiliary sections requires an infinite field, we pass to $k(t)$; the relevant intersection degrees are unchanged, and the resulting radical-power containments descend to $k$ by faithful flatness. 

\begin{proof}[Proof of Corollary~\ref{thm:fewer-generators}]
After replacing $k$ by $k(t)$ if necessary, we may assume that $k$ is infinite. Let $f_1,\ldots,f_m$ generate $\mathcal J\otimes L$, and take $B=V(\mathcal J)$. We can choose $r=\min\{m,n\}$ linear combinations $$(s_1,\ldots,s_r)=(f_1,\ldots ,f_m)C$$ with $C\in M_{m\times r}(k)$ of full rank, such that each successive section is regular on the preceding common zero scheme outside $|B|$.
Indeed, at each associated point outside $|B|$, the generators $f_1,\ldots,f_m$ do not all vanish. Thus the vanishing of the next linear combination rules out a proper linear subspace of the coefficient space. The span of the preceding coefficient vectors is also proper. Avoiding these finitely many subspaces gives the required choice.

Since $X$ is Cohen-Macaulay and pure-dimensional, successive regular sections give $\dim\bigl(V(\underline s)\setminus|B|\bigr)\leq n-j$, $1\leq j\leq r$. If $r=m<n$, the coefficient matrix is invertible, so the new sections generate the same ideal sheaf and $V(s_1,\ldots,s_m)=B$.
Thus the condition~\textup{(2)} of Theorem~\ref{thm:unified-intersection} also holds.
Apply Theorem~\ref{thm:unified-intersection} with $L_1=\cdots=L_r = A = L$. The last assertion follows from $c_1(A^a)=a \cdot c_1(A)$.
\end{proof}

\begin{proof}[Proof of Corollary~\ref{thm:mixed-degrees}]
Let $r=\min\{m,n\}$ and $B=V(\mathcal J)$. Take $s_1=f_m$, which is regular because $X$ is integral. For $1\leq j<r$, choose successively
\begin{equation}\label{eq:unified-triangular-sections}
 s_{j+1} =f_j+\sum_{i=j+1}^{m-1}u_{ji}f_i,\quad  u_{ji} \in H^0(X,L^{a_j-a_i}).
\end{equation}
At any associated point of $V(s_1,\ldots,s_j)$ outside $|B|$, the sections $f_j,\ldots,f_{m-1}$ cannot all vanish:
otherwise the preceding triangular equations, read backwards, would force all the $f_i$ to vanish at that point. Since each $L^{a_j-a_i}$ is globally generated, the condition that $s_{j+1}$ vanish at such a point is either empty or a proper affine linear condition. Avoiding the finitely many proper affine subspaces makes $s_{j+1}$ regular on the preceding common zero scheme outside $|B|$. Hence all the dimension conditions of Theorem~\ref{thm:unified-intersection} hold.

If $r=m<n$, then after local trivialization of $L$, the full list $(s_1,\ldots,s_m)$ is obtained from $(f_m,f_1,\ldots,f_{m-1})$ by an invertible triangular change. Consequently, $V(s_1,\ldots,s_m)=B$, which gives the condition~\textup{(2)} of Theorem~\ref{thm:unified-intersection}. 
Take $L_1=L^{a_m}$, $L_{j+1}=L^{a_j}$, $1\leq j<r$, $A=L$. Apply Theorem~\ref{thm:unified-intersection} to these line bundles, we obtain the exponent
\[
\deg\left( c_1(L^{a_m}) \prod_{j=1}^{r-1}c_1(L^{a_j})c_1(L)^{n-r}\cap[X]\right) = \deg_L X\cdot a_m a_1\cdots a_{r-1},
\]
as required.
\end{proof}

\begin{remark}
For Corollary~\ref{thm:mixed-degrees}, it suffices that the difference powers
$L^{a_j-a_i}$ appearing in \eqref{eq:unified-triangular-sections} are globally generated. In particular, if $n=1$ or $m\le 2$, so $L$ only needs to be ample.
\end{remark}

The proof of Corollary~\ref{thm:mixed-degrees} uses global generation of the difference powers to choose its triangular sections. For ample line bundles, the following result gives bounds using a finite flat root cover and higher-degree auxiliary sections.

\begin{corollary}
\label{thm:ample-mixed}
Let $X$ be an integral projective Cohen-Macaulay scheme of pure dimension $n\geq1$ over a field $k$. Let $L$ be an ample line bundle, and let $f_i\in H^0(X,L^{a_i})$ be nonzero sections with $a_1\geq\cdots\geq a_m\geq1$ that generate $\mathcal J$. Then 
$$(\sqrt{\mathcal J})^{ \deg_L X \cdot a_1\cdots a_m}  \subseteq\mathcal J.$$ 
If $m>n$, choose $b\geq a_1$ such that $L^{b-a_i}$ is globally generated for every $i<m$. Then
$$(\sqrt{\mathcal J})^{\deg_L X \cdot a_m b^{n-1}} \subseteq\mathcal J.$$ 
Thus the smaller of these two exponents is valid. In particular, the exponent of Corollary~\ref{thm:mixed-degrees} remains valid for merely ample $L$ whenever $m\leq n$.
\end{corollary}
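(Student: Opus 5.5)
The plan is to apply Theorem~\ref{thm:unified-intersection} to suitably chosen sections. Because $L$ is only ample, the global generation needed in Corollary~\ref{thm:mixed-degrees} has to be replaced: for the first bound we use a finite flat cover on which $L$ acquires roots, and for the second we use the hypothesis on $L^{b-a_i}$ directly. First reduce to an infinite field by passing to $k(t)$, as in the proof of Corollary~\ref{thm:fewer-generators}.

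\emph{First bound, $m\le n$.} Take $s_1=f_m$, which is regular since $X$ is integral. Then try to take $s_{j+1}=f_j$ for $j<m$, with $L_1=L^{a_m}$, $L_{j+1}=L^{a_j}$, $r=m$ and $A=L$; the resulting exponent is $\deg_L X\cdot a_1\cdots a_m$. The dimension condition~(1) may fail for the raw $f_j$, so the $f_j$ have to be replaced by general combinations. Since $f_j$ and $f_i$ live in different powers of $L$, I would pass to a finite flat cover $\pi\colon X'\to X$. The cover is built so that $\pi^*L\cong M^{N}$ for a globally generated ample $M$ (for instance $L^{N}$ very ample and a root cover extracting an $N$-th root of a general section). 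It can be chosen Cohen--Macaulay, for example as a cyclic cover $\operatorname{Spec}\bigoplus_{i<N}L^{-i}$ branched along a general divisor of a very ample power of $L$. Degrees then scale by $\deg\pi$. On $X'$ one forms the triangular combinations with coefficients in powers of $M$ and applies Theorem~\ref{thm:unified-intersection}. The containment $(\sqrt{\mathcal J\mathcal O_{X'}})^{d'}\subseteq \mathcal J\mathcal O_{X'}$ descends by faithful flatness, provided $d'$ is computed against $\pi^*$ of the original bundles, which gives exactly $\deg\pi\cdot \deg_LX\, a_1\cdots a_m$ divided appropriately.

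An alternative that avoids covers is to take $s_{j+1}=f_j^{\,c}+\sum_{i} u_{ji}f_i$ with higher-degree auxiliary sections $u_{ji}\in H^0(L^{c a_j-a_i})$. This only inflates the degrees, so I would use the cover.

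\emph{Second bound, $m>n$.} Take $s_1=f_m$ with $L_1=L^{a_m}$. For $1\le j<n$, take $s_{j+1}=\sum_{i<m}u_{ji}f_i$ with $u_{ji}\in H^0(L^{b-a_i})$, all in $H^0(\mathcal J\otimes L^b)$, and $L_{j+1}=L^b$, $r=n$. Global generation of $L^{b-a_i}$ makes the vanishing of $s_{j+1}$ at an associated point of $V(s_1,\dots,s_j)$ outside $|B|$ a proper linear condition. Such a point exists only where some $f_i$ with $i<m$ is nonzero, which holds because $f_m$ already vanishes there and not all $f_i$ vanish. So a general choice gives condition~(1), and condition~(2) is vacuous since $r=n$. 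Theorem~\ref{thm:unified-intersection} then yields the exponent $\deg_LX\cdot a_m b^{n-1}$. Taking the smaller of the two exponents is immediate, and for $m\le n$ the first bound is exactly the exponent of Corollary~\ref{thm:mixed-degrees}.

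The main obstacle is the first bound: constructing a Cohen--Macaulay finite flat cover on which the pulled-back powers become compatible, and checking that condition~(2) survives pullback. Condition~(2) should survive because $\pi^{-1}(B)$ has the same support behaviour and the triangular change is invertible, which gives $V(s_\bullet)=\pi^{-1}B$.
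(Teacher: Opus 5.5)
\textbf{There is a genuine gap in your first bound.} On any auxiliary cover $\pi\colon X'\to X$ used to make $\pi^*L$ (or the differences $\pi^*L^{a_j-a_i}$) globally generated, apply Theorem~\ref{thm:unified-intersection} with $L_1=\pi^*L^{a_m}$, $L_{j+1}=\pi^*L^{a_j}$, $A=\pi^*L$. By the projection formula and $\pi_*[X']=\deg\pi\,[X]$, the exponent is $d'=\deg\pi\cdot\deg_LX\cdot a_1\cdots a_m$. Faithfully flat descent ($\mathcal J\mathcal O_{X'}\cap\mathcal O_X=\mathcal J$) returns the containment to $X$ with the \emph{same} exponent $d'$. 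Nothing lets you divide by $\deg\pi$, so ``divided appropriately'' is exactly the missing step, and this route cannot supply it: you get $\deg\pi$ times the claimed bound.

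The cover construction is also imprecise. Extracting an $N$-th root of one section of $L^N$ gives a section of $\pi^*L$, not a bundle $M$ with $M^N\cong\pi^*L$. One such root also does not make $\pi^*L$ globally generated when $\operatorname{Bs}|L|$ has positive dimension.

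Finally, you prove the first bound only for $m\le n$, but the statement asserts it for every $m$. For $m>n$ it can beat the second bound: with $n=2$, $(a_1,a_2,a_3)=(2,1,1)$ and $L$ not globally generated, one needs $b\ge4$, so $a_mb^{n-1}\ge4>2=a_1a_2a_3$. So ``taking the smaller exponent'' is not justified by your argument.

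\textbf{The missing idea} is to choose the cover so that its degree \emph{is} the factor $a_1\cdots a_m$, and to use it to put all generators into one bundle rather than to make $L$ globally generated.
\begin{itemize}
\item Adjoin roots $\sigma_i^{a_i}=\pi^*f_i$ with $\sigma_i\in H^0(Y,\pi^*L)$. Then $\pi_*\mathcal O_Y=\bigotimes_i\bigoplus_{q=0}^{a_i-1}L^{-q}$, locally $R[T_1,\dots,T_m]/(T_i^{a_i}-f_i)$.
\item This is finite flat and a complete intersection over $R$. So $Y$ is Cohen--Macaulay of pure dimension $n$, $\pi^*L$ is ample, and $\deg_{\pi^*L}Y=a_1\cdots a_m\deg_LX$.
\item For $\mathcal I=(\sigma_1,\dots,\sigma_m)$, the sheaf $\mathcal I\otimes\pi^*L$ is globally generated. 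Corollary~\ref{thm:degree} needs only ampleness, since its auxiliary combinations have constant coefficients. It gives $(\sqrt{\mathcal I})^{\deg_{\pi^*L}Y}\subseteq\mathcal I$, hence $(\sqrt{\mathcal J})^{\deg_LX\cdot a_1\cdots a_m}\mathcal O_Y\subseteq\mathcal I$.
\item Instead of descending a containment in $\mathcal J\mathcal O_Y$, contract: locally $\mathcal O_Y/\mathcal I\cong R/(f_1,\dots,f_m)$, so $\mathcal I\cap\mathcal O_X=\mathcal J$.
\end{itemize}
This handles every $m$ at once.

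Your second bound is correct and is the paper's argument: $s_1=f_m$, then general members of $\sum_{i<m}f_iH^0(X,L^{b-a_i})$, with $r=n$, $L_1=L^{a_m}$ and $L_2=\cdots=L_n=L^b$.
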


\begin{proof}
Consider the finite flat cover $\pi:Y\to X$ obtained by adjoining roots $\sigma_i^{a_i}=\pi^*f_i$. Its algebra is $ \pi_*\mathcal O_Y  =  \bigotimes_{i=1}^m \left(\bigoplus_{q=0}^{a_i-1}L^{-q}\right)$, with multiplication determined by the sections $f_i$. The defining equations are successively monic in new variables, so they form a regular sequence over the local polynomial rings of $X$. Thus $Y$ is projective, Cohen-Macaulay and pure of dimension $n$, and $\pi^*L$ is ample. Finite flatness and the projection formula give $\pi_*[Y] =  a_1\cdots a_m[X]$, $\deg_{\pi^*L}(Y) = a_1\cdots a_m\deg_L X$, see \cite[Tags 02RH and 02SU]{Stacks}.

Let $\mathcal I\subseteq\mathcal O_Y$ be the ideal sheaf generated by the root sections $\sigma_1,\ldots,\sigma_m$. Then $\mathcal I\otimes\pi^*L$ is globally generated. Applying Corollary~\ref{thm:fewer-generators} with both ample line bundles taken to be $\pi^*L$, we obtain $ (\sqrt{\mathcal I})^{\deg_{\pi^*L}(Y)}  \subseteq\mathcal I$. Since $\mathcal J\mathcal O_Y\subseteq\mathcal I$, it follows that $ (\sqrt{\mathcal J})^{ \deg_L Xa_1\cdots a_m}\mathcal O_Y \subseteq\mathcal I$.

The ideal $\mathcal I$ contracts to $\mathcal J$:
locally, quotienting the root algebra by all the $\sigma_i$ gives $R/(f_1,\ldots,f_m)$. This proves the first assertion.

For the second assertion, take $s_1=f_m$. It is regular because $X$ is integral. On $V(f_m)\setminus|B|$, the linear system $\sum_{i=1}^{m-1}
 f_i H^0(X,L^{b-a_i})  \subseteq H^0(X,\mathcal J\otimes L^b)$ has no base points. As in the proof of Corollary~\ref{thm:fewer-generators}, choose $n-1$ successive sections $s_2,\ldots,s_n$ that are regular on the preceding common zero scheme outside $|B|$.
Applying Theorem~\ref{thm:unified-intersection} with $r=n$, $L_1=L^{a_m}$ and $ L_2=\cdots=L_n=L^b$, we obtain $(\sqrt{\mathcal J})^{a_m b^{n-1}\deg_L X}\subseteq\mathcal J$. When \(n=1\), only \(s_1\) is needed. This proves the second assertion.
\end{proof}

We finish this section with examples showing that the exponents in Corollary~\ref{thm:fewer-generators}, Corollary~\ref{thm:degree}, and Corollary~\ref{thm:mixed-degrees} are sharp. The first example, which is adapted from Koll\'ar's Example~2.3 in \cite{Kollar88}, realizes the sharp exponents on smooth projective hypersurfaces.

\begin{example}
\label{ex:ambient-sharp}
For $1\leq m\leq n$ and $e\geq2$, set
\[
X=V(x_mx_0^{e-1}+\sum_{i=1}^m x_i^e-x_{n+1}^e)\subseteq\mathbb P_{\mathbb C}^{n+1}
\]
and $L=\mathcal O_X(1)$.
This hypersurface is integral and Cohen-Macaulay of dimension $n$, with $\deg_L X=e$. For $a_1\geq\cdots\geq a_m\geq1$, let $\mathcal J$ be generated by the sections $f_1=x_1^{a_1}$ and $f_i=x_{i-1}x_0^{a_i-1}-x_i^{a_i}$ for $2\leq i\leq m$. At $p=[1:0:\cdots:0]$, we work in the affine chart $x_0=1$. Eliminating $x_1,\ldots,x_{m-1}$ gives $x_m^{a_1\cdots a_m}=0$ and $(x_{n+1})^e=x_mu(x_m)$, where $u(0)=1$.
The formal inverse-function theorem therefore gives $\widehat{\mathcal O_{X,p}/\mathcal J_p} \cong\mathbb C[[x_{m+1},\ldots,x_{n+1}]]/ (x_{n+1}^{e a_1\cdots a_m})$. Its nilradical has nilpotence index $e a_1\cdots a_m$. By Corollary~\ref{thm:mixed-degrees}, the exponent $e a_1\cdots a_m$ is attained.

When $m=n$, taking all $a_i=1$ attains the exponent $\deg_L X$ in Corollary~\ref{thm:degree}.

For $m>n$, apply the same construction to the degrees $a_1,\ldots,a_{n-1},a_m$.
Denote the resulting generators by $f_1,\ldots,f_{n-1},f_m$, and set $f_i=x_0^{a_i-a_m}f_m$ for $n\leq i\leq m-1$. These additional sections have the prescribed degrees and do not change the ideal sheaf. Thus the exponent $e a_1\cdots a_{n-1}a_m$ is attained as well, for $n=1$, this exponent is $e a_m$.

For $e=1$ and $m\leq n$, consider the sections $f_1=x_1^{a_1}$ and $f_i=x_{i-1}x_0^{a_i-1}-x_i^{a_i}$ for $2\leq i\leq m$ on $\mathbb P^n$, adding multiples as above when $m>n$. Taking all $a_i=a$ gives the exponent $a^{\min\{m,n\}}$ in Corollary~\ref{thm:fewer-generators}.
\end{example}

The Cohen-Macaulay hypothesis is essential for the degree bound, as the following example shows.

\begin{example} \label{ex:cm-hypothesis}
Over an algebraically closed field $k$, set
\[
S=k[x+y,z,xy,x^2y,xz]\subseteq k[x,y,z]
\]
with the induced grading, and $X=\operatorname{Proj}S$. The finite birational morphism $\nu:\mathbb P^2\to X$ identifies $[1:0:0]$ and $[0:1:0]$ to a point $o$ and it is an isomorphism elsewhere. 
The scheme $X$ is integral but not Cohen-Macaulay. The sheaf $L=\mathcal O_X(1)$ is invertible: 
it is trivialized by $x+y$ near $o$, and invertibility elsewhere follows from $\nu$. Since $\nu^*L=\mathcal O_{\mathbb P^2}(1)$, $L$ is ample and $\deg_LX=1$. Let $\mathcal J$ be the ideal defined by $z\in H^0(X,L)$.
Then $\mathcal J\otimes L\cong\mathcal O_X$. On $D_+(x+y)$, let $t=x/(x+y)$ and $w=z/(x+y)$, the coordinate ring is $R=\{f\in k[t,w]:f(0,0)=f(1,0)\}$ and $\mathcal J$ corresponds to $(w)$. Here $tw\notin(w)$, since $t\notin R$, but $(tw)^2=w(t^2w)\in(w)$. Thus $\mathcal J$ is not radical, and exponent $\deg_LX=1$ does not suffice.
\end{example}

\vspace{20pt}

\section{Sharp bounds in graded Cohen-Macaulay rings}
\label{sec:graded-sharp-bounds}

We now pass from ideal sheaves to homogeneous ideals. We use the same evaluation images and perform one final interpolation on a finite scheme, thereby deriving the homogeneous and affine version.
For a finitely generated standard graded $k$-algebra $S=\bigoplus_{\nu=0}^{\infty}S_\nu$ with $S_0=k$ and  $S_+=\bigoplus_{\nu=1}^{\infty}S_\nu$, we write $I^{\mathrm{sat}}=I:S_+^\infty$ for a homogeneous ideal $I\subseteq S$. On the geometric side, let $Y=\operatorname {Proj}S$. For a closed subscheme $Z\subseteq Y$, let $I_Z\subseteq S$ denote its saturated homogeneous defining ideal.

\begin{proof}[Proof of Theorem~\ref{thm:graded-homogeneous-containment}]
As in Section~\ref{sec:refined-degrees}, we may assume that $k$ is infinite after passing to a purely transcendental extension of $k$. Choose the triangular sections
\[
s_{1}, \ldots, s_{\min\{m,n+1\}}
\]
as in \eqref{eq:unified-triangular-sections} with $s_1=f_m$ and $s_{j+1}\in I\cap S_{a_j}$ for  $j=1,\ldots ,\min\{m-1,n\}$. The same avoidance argument yields successive regular cuts outside $V_Y(I)$. If $m\leq n$, these sections generate $I$; if $m>n$, the last section $s_{n+1}$ does not vanish on $V_Y(s_1,\ldots,s_n)\setminus V_Y(I)$.

Consider the projective cone $X=\operatorname{Proj}S[t]$ with $\deg t=1$. Take $L=\mathcal O_X(1)$ and $\mathcal J=\widetilde{IS[t]}$ with $B=V(\mathcal J)$. 
Then $X$ is an integral projective Cohen-Macaulay scheme of dimension $n+1$ and degree $\deg Y$.
We identify $Y$ with $V_X(t)$ and $D_+(t)$ with $\operatorname{Spec}S$.
The chosen sections satisfy the conditions~\textup{(1)} and~\textup{(2)} of Theorem~\ref{thm:unified-intersection} on $X$: away from the vertex the projection to $Y$ has one-dimensional fibers, and the vertex belongs to $B$.

If $m\leq n$, applying Theorem~\ref{thm:unified-intersection} on $X$, and restricted to $D_+(t)$, we obtain $(\sqrt I)^{\deg Y \cdot a_1\cdots a_m}\subseteq I$. Since $\deg Y \cdot a_1\cdots a_m  \geq\deg Y+\sum_{i=1}^m(a_i-1)$, $\deg Y \cdot a_1\cdots a_m$ equals $d$.

Suppose henceforth that $m>n$.
Starting with $\mathcal F_0=\omega_X$, apply the $|B|$-torsion removal and regular cut construction of Section~\ref{sec:all-dimensions} for the first $n$ sections $s_1,\ldots,s_n$.
For geometric convenience, we construct closed subschemes $Z_i$, in parallel with the sheaf-theoretic construction of $\mathcal F_i$. Let $Z_0 = X$ and take $\mathcal O_{Z_j^{{B \mathrm {tf}}}} =\mathcal O_{Z_j}/\Gamma_{|B|}\mathcal O_{Z_j}$, $Z_{j+1}=Z_j^{{B \mathrm {tf}}}\cap V_X(s_{j+1})$. The sections are regular on both $\mathcal F_j^{B\mathrm{tf}}$ and $\mathcal O_{Z_j^{B\mathrm{tf}}}$. Inductively, $\mathcal I_{Z_j}\mathcal F_j=0$, $[\mathcal F_j]_{n+1-j}=[Z_j]_{n+1-j}$ and the cycle equality also holds after removing $|B|$-torsions.
Indeed, taking $|B|$-torsion free quotient preserves the annihilation relation and removes the same top-dimensional components supported on $|B|$.
Starting from $[\omega_X]_{n+1}=[X]$, equality of the generic multiplicities is then preserved by the local length formula for each regular cut \cite[Tag 02QG]{Stacks}.

The degree calculation and the inclusions between evaluation image ideals from Section~\ref{sec:all-dimensions} give
\begin{equation} \label{eq:graded-degree-budget}
\begin{aligned}
    &\deg Y\cdot a_m a_1\cdots a_{n-1}\\=&\deg_L[Z_n]_1+\textstyle\sum_{j=1}^{n-1} (a_j\cdots a_{n-1})\deg_L\left([Z_j]_{n+1-j}-[Z_j^{{B \mathrm {tf}}}]_{n+1-j}\right)
\end{aligned}
\end{equation}
and, on $D_+(t)=\operatorname{Spec}S$,
\begin{equation}
\label{eq:graded-image-containment}
(\sqrt I)^{\sum_{j=1}^{n-1}\deg_L([Z_j]_{n+1-j}-[Z_j^{{B \mathrm {tf}}}]_{n+1-j})}(S/I)
\subseteq
\mathfrak q_n(\mathcal F_n)|_{t\ne0}.
\end{equation}
Here we identify the evaluation image ideal $\mathfrak q_n(\mathcal F_n)|_{t\ne0}$ on this affine chart with the corresponding ideal of $S/I$.

Let $I_{Z_n}\subseteq S[t]$ be the saturated homogeneous defining ideal of $Z_n$, and $K=I_{Z_n}\cap S$.
The cuts and the formation of the $|B|$-torsion-free quotients commute with adjoining $t$, and every extended ideal is saturated with respect to $(S[t])_+$. Hence $I_{Z_n}=KS[t]$, $Z_n\cap Y=\operatorname{Proj}(S/K)$, and $I_{Z_n\cap Y}=K^{\mathrm{sat}}$, with $\dim S/K\leq1$.

Let $F_n=\Gamma(D_+(t),\mathcal F_n)$, and let $\omega_S$ be a canonical module of $S$.
On $D_+(t)$, after trivializing $\mathcal O_X(1)$ twists by powers of $t$, the evaluation image is obtained from $\operatorname{Ext}_S^n(F_n,\omega_S)\to S/I$. Since $KF_n=0$, its source is zero or pure of dimension one by Lemma~\ref{lem:leading}.

We then prove the containment $\sqrt{I}^d\subseteq I$. If $Z_n\cap Y=\varnothing$, then $\dim F_n\leq0$, so $\operatorname{Ext}_S^n(F_n,\omega_S)=0$ by Lemma~\ref{lem:leading} \textup{(1)}. Thus the right-hand side of \eqref{eq:graded-image-containment} is zero. Since \eqref{eq:graded-degree-budget} bounds the exponent on its left by $d$, the assertion follows.

So we assume henceforth that $Z_n\cap Y\ne\varnothing$. By \eqref{eq:graded-image-containment}, it suffices to prove 
\begin{equation}
\label{eq:graded-terminal-vanishing}
(\sqrt I)^{d-\sum_{j=1}^{n-1}\deg_L([Z_j]_{n+1-j}-[Z_j^{{B \mathrm {tf}}}]_{n+1-j})}\mathfrak q_n(\mathcal F_n)|_{t\ne0}=0.
\end{equation}
By \eqref{eq:graded-degree-budget} and the definition of $d$, the exponent here is at least $\deg_L[Z_n]_1$. Moreover, $K^\mathrm {sat}$ annihilates the evaluation image ideal $\mathfrak q_n(\mathcal F_n)|_{t\neq 0}$. The module $\operatorname{Ext}_S^n(F_n,\omega_S)$ is annihilated by $K$, and $K^{\mathrm{sat}}/K$ has finite length. The module $K^{\mathrm{sat}}\operatorname{Ext}_S^n(F_n,\omega_S)$ therefore has finite length and must vanish by purity. Consequently, $K^{\mathrm{sat}}\cdot\mathfrak q_n(\mathcal F_n)|_{t\neq0}$ equals to zero.

Consider the following cases.

\emph{Case 1: Every point of $Z_n\cap Y$ lies in $V_Y(I)$.}
On each local Artin component of $Z_n\cap Y$, the localization of $\sqrt I$ lies in the maximal ideal. By Lemma~\ref{lem:length}, $(\sqrt I)^{\deg_L[Z_n]_1} \subseteq I_{Z_n\cap Y}=K^{\mathrm{sat}}$. Since $K^{\mathrm{sat}}$ annihilates the evaluation image ideal $\mathfrak q_n(\mathcal F_n)|_{t\neq 0}$, this proves \eqref{eq:graded-terminal-vanishing}.

\emph{Case 2: Some point of $Z_n\cap Y$ lies outside $V_Y(I)$}.  We divide the components outside $V_Y(I)$ by $s_{n+1}$ and then lift the resulting section. 
Let $\varphi$ be a nonzero homogeneous element in the ideal $\sqrt I^{d-\sum_{j=1}^{n-1} \deg_L([Z_j]_{n+1-j}-[Z_j^{{B \mathrm {tf}}}]_{n+1-j})}$. 
On every local component of $Z_n\cap Y$ supported on $V_Y(I)$, the restriction of $\varphi$ is zero by a similar length bound argument in the proof of Lemma~\ref{lem:length}, because the exponent is at least $\deg_L[Z_n]_1$.
On every other component, $s_{n+1}$ is nowhere zero.
Taking zero on the former components and $\varphi/s_{n+1}$ on the latter defines a section of $\mathcal O_{Z_n\cap Y}(\deg\varphi-a_n)$. In what follows, we show that it lifts to some $\widetilde\varphi\in S_{\deg\varphi-a_n}$. So $\varphi-s_{n+1}\widetilde\varphi\in I_{Z_n\cap Y}=K^{\mathrm{sat}}$.
Both $K^{\mathrm{sat}}$ and $(s_{n+1})\subseteq I$ annihilate $\mathfrak q_n(\mathcal F_n)|_{t\ne0}$, thus $(\sqrt I)^d(S/I)=0$, as required.

Now we check the required lifting. In our case, $S/K^{\mathrm{sat}}$ is a standard graded Cohen-Macaulay ring, and $$\dim_k H^0(Z_n\cap Y,\mathcal O_{Z_n\cap Y})=\deg (S/K^{\mathrm{sat}})=\deg_L[Z_n]_1.$$  Choose a general linear nonzero divisor $h\in (S/K^{\mathrm{sat}})_1$ such that $S/K^{\mathrm{sat}}$ is finite flat over $k[h]$. Then we can write $ S/K^{\mathrm{sat}}=\oplus_i k[h](-b_i)$ as graded modules with shifted degrees $b_i$. Thus the final curve cone admits a finite flat morphism $\pi:\operatorname{Proj} S/K^{\mathrm{sat}} [t]\rightarrow\mathbb P^1_{[h:t]}$ such that $\pi_*\mathcal O_{\operatorname{Proj}S/K^{\mathrm{sat}}[t]}\cong \oplus_i\mathcal O_{\mathbb P^1}(-b_i)$.  
Since $S/K^{\mathrm{sat}}[t]$ is Cohen-Macaulay of dimension two, its graded pieces compute these global sections. For $v\geq\max_i b_i$, we have the vanishing $H^1(\mathbb P^1,\mathcal O(v-1-b_i))=0$ and therefore  $S_v\rightarrow H^0(Z_n\cap Y,\mathcal O_{Z_n\cap Y}(v))$ is surjective.  

From the surjectivity established above, verifying the liftability of $\varphi/s_{n+1}$ reduces to checking the degree inequality 
\[d-\sum_{j=1}^{n-1}\deg_L([Z_j]_{n+1-j}-[Z_j^{{B \mathrm {tf}}}]_{n+1-j})-a_n\geq \max b_i.\]

If at least one cycle difference $\deg_L([Z_j]_{n+1-j}-[Z_j^{{B \mathrm {tf}}}]_{n+1-j})$ has positive degree, every weight $a_j\cdots a_{n-1}$ in \eqref{eq:graded-degree-budget} is at least $a_n$. Since the cycle degrees are nonnegative integers and at least one is positive, that identity and the definition of $d$ give
\[
(d-\sum_{j=1}^{n-1} \deg_L([Z_j]_{n+1-j}-[Z_j^{{B \mathrm {tf}}}]_{n+1-j}))-a_n \geq \deg_L[Z_n]_1-1.
\]
Moreover, $S/K^{\mathrm{sat}}+(h)$ is a standard graded Artin algebra of length $\deg_L[Z_n]_1$. So $\deg_L[Z_n]_1-1\geq\max_i b_i$.

If all the cycle differences $\deg ([Z_j]_{n+1-j}-[Z_j^{B\mathrm {tf}}]_{n+1-j})=0$, then $Z_j^{B\mathrm{tf}}=Z_j$ for every $0\leq j<n$. 
Indeed, inductively the preceding quotients are Cohen-Macaulay, so any nonzero $|B|$-torsion would contribute a positive top-dimensional cycle.
Consequently, $s_1,\ldots,s_n$ is an $S$-regular sequence and $K=(s_1,\ldots,s_n)=K^{\mathrm{sat}}$.
A linear Artin reduction of $S$ has length $\deg Y$, hence highest nonzero degree at most $\deg Y-1$. The Hilbert-series formula for this regular sequence therefore gives
\[
d-a_n\geq\deg Y-1+(a_m-1)+\sum_{i=1}^{n-1}(a_i-1)\geq  \max_i b_i,
\]
so the required inequality follows as well.
\end{proof}

\begin{remark}
Retain the assumptions and notation of Theorem~\ref{thm:graded-homogeneous-containment}, but remove the domain assumption on $S$. The Cohen-Macaulay hypothesis still gives the weaker bound 
$$(\sqrt I)^{\deg Y\cdot a_1\cdots a_{\min\{m,n+1\}}} \subseteq I.$$ 
Indeed, on $X=\operatorname{Proj}S[t]$, take $\mathcal J=\widetilde{IS[t]}$. We apply the triangular avoidance argument of Section~\ref{sec:refined-degrees} starting with degree $a_1$. This gives successive sections of degrees $a_1,\ldots,a_{\min\{m,n+1\}}$ that are regular outside $V(\mathcal J)$. 
If $m<n+1$, these sections generate $\mathcal J$, so the condition~\textup{(2)} of Theorem~\ref{thm:unified-intersection} also holds. Applying Theorem~\ref{thm:unified-intersection} with auxiliary
line bundle $\mathcal O_X(1)$ and restricting to $D_+(t)\cong\operatorname{Spec}S$ proves the assertion.
However, this estimate does not, in general, recover the sharp exponent of \cite[Theorem~1.5]{Kollar88} for polynomial rings. 
\end{remark}

\begin{remark} The prime-power product conclusion in the second part of Theorem~\ref{thm:unified-intersection} admits a graded refinement under the hypotheses of Theorem~\ref{thm:graded-homogeneous-containment}: There exist finitely many reduced irreducible closed subschemes $W_\alpha\subseteq V_Y(I)$ with
\[
\mathrm {codim}_Y W_\alpha\in\{1,\ldots,\min\{m,n\}\},
\]
positive integers $e_\alpha$, and an integer $e_0\geq0$ such that $S_+^{e_0}\prod_\alpha I_{W_\alpha}^{e_\alpha}\subseteq I$ and $\bigcup_\alpha W_\alpha=V_Y(I)$. Moreover, we have $e_0+\sum_\alpha e_\alpha\deg W_\alpha\leq d$ and 
$$\textstyle\sum_\alpha e_\alpha\left(\textstyle\prod_{i=\mathrm {codim}_Y W_\alpha}^{\min\{m,n\}-1}a_i\right)\deg W_\alpha\leq \deg Y\cdot a_m a_1\cdots a_{\min\{m,n\}-1}.$$
If $m\leq n$, one may take $e_0=0$. This refinement is obtained by retaining the generic multiplicities of the cokernels $\mathcal Q_j$ arising from the $|B|$-torsion removal steps and introducing the factor $S_+^{e_0}$ in the final step.
\end{remark}

We next give the proof of Corollary~\ref{cor:graded-affine}.

\begin{proof}[Proof of Corollary~\ref{cor:graded-affine}]
Choose the homogenizing coordinate $x_0$ so that
\[
V=D_+(x_0)\subseteq\operatorname{Proj}S,
\]
and homogenize the $f_i$ to $F_i\in S_{a_i}$. Set $I=(F_1,\ldots,F_m)$.
The first assertion follows by applying Corollary~\ref{thm:mixed-degrees} on $Y$ to $\widetilde I$, and then restricting the resulting containment to $V$.
If $\mathfrak a=k[V]$, homogenizing a representation of $1$ shows that $x_0\in\sqrt I$. Theorem~\ref{thm:graded-homogeneous-containment} gives $x_0^d=\sum_{i=1}^m G_iF_i$ with $G_i\in S_{d-a_i}$. Dehomogenization proves \eqref{eq:graded-affine-certificate}. The last assertion follows from the Nullstellensatz and faithful-flat descent.
\end{proof}

The following example shows that Theorem~\ref{thm:graded-homogeneous-containment} and Corollary~\ref{cor:graded-affine} are optimal.
\begin{example}
\label{ex:graded-radical-sharpness}

Fix $n,m,\delta\geq1$ and $a_1\geq\cdots\geq a_m\geq1$. Both rings below are Cohen-Macaulay hypersurface domains of dimension $n+1$ and degree $\delta$.
\begin{enumerate}
    \item \emph{The product bound.} Let $r=\min\{m,n\}$ and take $$S=k[t,z,x_1,\ldots,x_n]/(x_rt^{\delta-1}-z^\delta), \quad I=(x_1^{a_m},\ x_jt^{a_j-1}-x_{j+1}^{a_j}\ (1\leq j<r)).$$ Add the redundant generators $x_1^{a_j}$ for $r\leq j<m$ to obtain all prescribed degrees. As in Example~\ref{ex:ambient-sharp}, successive elimination gives $z\in\sqrt I$ and
    \[
    S/(I,t-1)\cong k[z,x_{r+1},\ldots,x_n]/(z^{\delta a_m a_1\cdots a_{r-1}}).
    \]
    Thus $t^v z^{({\delta a_m a_1\cdots a_{r-1}})-1}\notin I$ for every $v\geq0$. Since dehomogenization at $t=1$ preserves the generator degrees, ${\delta a_m a_1\cdots a_{r-1}}$ is optimal for $\sqrt{I}^d\subseteq I^\mathrm {sat}$, $(\sqrt{I}^d)_v\subseteq I_v$ $v\geq v_0$, and the affine radical-power bound $\sqrt{\mathfrak a}^d\subseteq \mathfrak a$.

\item \emph{The additive bound.} For $m>n$, take $$S=k[u_0,\ldots,u_n,w]/(w^\delta-u_0u_1^{\delta-1}), \quad  I=(u_0^{a_m},u_1^{a_1},\ldots,u_n^{a_n}),$$ together with the redundant generators $u_0^{a_j}$ for $n<j<m$. The displayed parameters form a regular sequence, so
\[ \operatorname{Hilb}_{S/I}(z)  =(1+\cdots+z^{\delta-1})(1+\cdots+z^{a_m-1})   \prod_{i=1}^n(1+\cdots+z^{a_i-1}).
\]
Since the last nonzero degree is $\delta-1+a_m-1+\sum_{i=1}^n(a_i-1)$, the least exponent is $\delta-1+a_m+\sum_{i=1}^n(a_i-1)$. Together with (1), this proves optimality of $d=\max\{\delta a_m a_1\cdots a_{r-1},\delta-1+a_m+\sum_{i=1}^n(a_i-1)\}$.

    \item \emph{Degree bound for $g_i$'s.} For $0\ne h\in S_1$, we let $\bar f_i=f_i/h^{a_i}$. By the homogenization in Corollary~\ref{cor:graded-affine}, there exist polynomial functions $g_i$ with $\deg g_i+a_i\leq v$ and satisfying $ 1=\sum_{i=1}^m g_i\bar f_i$, if and only if $h^v\in I$.
\begin{itemize}
    \item In (1), take $h=z$. If $\delta>1$ or $r>1$, all chosen generators are nonzero modulo $z$, so their affine degrees remain $a_i$. We have $z^{\delta a_m a_1\cdots a_{r-1}-1}\notin I$, while Theorem~\ref{thm:graded-homogeneous-containment}, applied to the $r$ generators gives $z^{\delta a_m a_1\cdots a_{r-1}}\in I$. Thus the least possible value of $\max_{g_i}(\deg g_i+a_i)$ among such representations is $\delta a_m a_1\cdots a_{r-1}$.

    \item In (2), assume characteristic zero. Polarization shows that powers of linear forms span $(S/I)_{\delta-1+a_m-1+\sum_{i=1}^n(a_i-1)}$. So a general element $h$ in $S_1$ satisfies $h^{\delta-1+a_m-1+\sum_{i=1}^n(a_i-1)}\notin I$ and $h^{\delta-1+a_m+\sum_{i=1}^n(a_i-1)}\in I$. We may also choose $h$ regular on the quotient $S/(f_i)$, since these quotients are Cohen-Macaulay of positive dimension. This preserves the degrees $a_i$ after dehomogenization. Consequently, the least possible value of $\max_{g_i}(\deg g_i+a_i)$ is $\delta-1+a_m+\sum_{i=1}^n(a_i-1)$. If $\delta=r=1<m$, then $n=1$ and $\delta-1+a_m+\sum_{i=1}^n(a_i-1)\geq \delta a_m a_1\cdots a_{r-1}$. So the second family suffices. 
    
    \item In the remaining case $\delta=m=1$, a nonconstant polynomial function on a degree-one affine variety cannot generate the unit ideal. 
\end{itemize}
Hence the uniform bound $\deg g_i+a_i\leq d$ is optimal.
\end{enumerate}
\end{example}

\begin{remark}
The examples in parts (1) and (2) establish the sharpness of the containment bounds. These work over every field. In part (3), characteristic zero is used only to attain the additive bound for $\max_{g_i}(\deg g_i+a_i)$ by a power of a linear form. This characteristic-zero construction also shows that the uniform bound $\deg g_i+a_i\leq d$ cannot be improved in a statement valid over arbitrary fields. For $\delta=1$, the affine charts in the examples of part (3) are affine spaces, so these examples also establish sharpness for $\max_{g_i}\deg(g_if_i)$.
\end{remark}

\vspace{20pt}
\noindent \textbf{Acknowledgment}. 
While exploring the construction of pure sheaves via local duality, we consulted AI for the related properties of extension sheaves. The AI introduced us to the Koszul–Fitting ideals and provided the reference paper \cite{BH}. We also used AI for language suggestions. The authors take full responsibility for the paper.

\vspace{20pt}

\noindent \textbf{Statements and Declarations}.

\noindent\emph{Competing interests}. The authors have no relevant financial or non-financial interests to disclose.

\noindent\emph{Funding}. This work is supported partially by the National Key R\&D Program of China (No. 2021YFA1002600). Z. Li is partially supported by NSFC (No. 12201060). X. Meng is partially supported by NSFC (No. 12271057). X. Su is partially supported by NSFC (No. 12301056) and the Fundamental Research Funds for the Central Universities. L. Zhu is partially supported by NSFC (No. 12371080).

\end{document}